\newmdtheoremenv[%
backgroundcolor=green!10,%
outerlinecolor=black,%
leftmargin=0,%
rightmargin=0,
innertopmargin =3pt,%
innerleftmargin = 5pt,
innerrightmargin = 5pt,
splittopskip = \topskip,%
skipabove = \baselineskip,%
skipbelow = \baselineskip,%
roundcorner=5, ntheorem]
{theorem}{Theorem}[section]
\newtheorem{corollary}{Corollary}[section]
\newtheorem{lemma}{Lemma}[section]
\newtheorem{remark}{Remark}[section]
\newtheorem{assumption}{Assumption}[section]
\newenvironment{proof}{{\noindent\it Proof.}\quad}{\hfill $\square$\\}
\newcommand{\Lt}{L^2}
\newcommand{\Hs}{H^s}
\begin{document}
\title{Breaking quadrature exactness: A spectral method for the Allen--Cahn equation on spheres}

\author{Hao-Ning Wu\footnotemark[1]
       \quad\quad Xiaoming Yuan\footnotemark[2]}

\footnotetext[1]{Department of Mathematics, University of Georgia, Athens, GA (hnwu@uga.edu).}
\footnotetext[2]{Department of Mathematics, The University of Hong Kong, Hong Kong, China (xmyuan@hku.hk).}


\maketitle

\begin{abstract}
We present a novel spectral method for the Allen-Cahn equation on spheres, eliminating the reliance on conventional quadrature exactness conditions. By replacing these conditions with a restricted isometry relation derived from Marcinkiewicz--Zygmund quadrature systems, our method achieves precise control over quadrature errors for polynomial integrands. This theoretical advancement enables the use of substantially more choices of quadrature points than classical spectral methods while maintaining rigorous error bounds. The proposed method requires only mild constraints on the polynomial degree of numerical solutions to establish both the maximum principle and energy stability, representing a considerable departure from existing techniques that depend on restrictive time stepping sizes,  Lipschitz property of the nonlinear term, or $L^{\infty}$ boundedness of numerical solutions. Notably, our method permits time stepping sizes independent of the diffusion coefficient, making it suitable for long-time simulations. Inspired by the effective maximum principle proposed by Li (Ann. Appl. Math., 37(2): 131--290, 2021), we develop an almost sharp maximum principle that allows controllable deviation of numerical solutions from the sharp bound with large time stepping sizes. Furthermore, we prove that when the quadrature rule attains sufficient exactness, our method preserves energy stability and coincides mathematically with the Galerkin method. In addition, we propose an energy-stable mixed-quadrature scheme which works well even with randomly sampled initial condition data. Our numerical experiments on $\mathbb{S}^2$ validate the theoretical results about the energy stability and the almost sharp maximum principle.
\end{abstract}

\textbf{Keywords: }{spectral method, hyperinterpolation, maximum principle, energy stability, \\
\indent \indent \indent~~~~\indent quadrature exactness, Allen--Cahn, Marcinkiewicz--Zygmund}

\smallskip

\textbf{AMS subject classifications:} 65M70, 65D32, 35B50, 58J35, 33C55

\bigskip

\section{Introduction}

Partial differential equations (PDEs) posed on the sphere play a central role in modeling geological, meteorological, and oceanic processes, with the sphere serving as an idealized representation of Earth. Moreover, solving PDEs on the sphere represents the simplest case of the broader challenge of solving PDEs on smooth compact manifolds.

In this paper, we propose a novel spectral method for numerically solving stiff and semi-linear PDEs on the unit sphere $\mathbb{S}^{d-1}=\{x\in\mathbb{R}^d:\|x\|_2=1\}\subset\mathbb{R}^d$ with dimension $d\geq 3$. We consider equations of the form
\begin{equation}\label{equ:PDE}
u_t=\mathbf{L}u+\mathbf{N}(u),\quad u(0,x) = u_0(x),
\end{equation}
where $u = u(t, x)$ with $(t,x)\in[0,\infty)\times\mathbb{S}^{d-1}$ is a function of time $t$ and spatial variable $x\in \mathbb{S}^{d-1}$, $\mathbf{L}$ is a constant-coefficient linear differential operator, and $\mathbf{N}$ is a constant-coefficient nonlinear differential (or non-differential) operator of lower order. Although our primary focus is on the sphere, the proposed numerical method can be readily extended to other compact manifolds and even Euclidean domains, provided an orthogonal polynomial basis is available. This adaptability broadens the method's applicability beyond the sphere.

To demonstrate our method, we consider the Allen--Cahn equation
\begin{equation}\label{equ:AC}
u_t=\nu^2\Delta u-F'(u), \quad u(0,x) = u_0(x)
\end{equation}
on the sphere $\mathbb{S}^{d-1}$ as a model equation, where $\Delta$ is the Laplace--Beltrami operator on $\mathbb{S}^{d-1}$. Introduced in \cite{allen1979microscopic} for describing the process of phase separation in iron alloys, the Allen--Cahn equation \eqref{equ:AC} is a reaction-diffusion equation with a linear diffusion term $\nu^2\Delta u$ and a nonlinear reaction term $F'(u)$. The solution $u=u(t,x)$ is a scalar function typically representing the concentration of one of the two metallic components of the alloy. The nonlinear term has the usual double well form of $F'(u)=f(u) = u^3-u$ with $F(u)=\frac14(u^2-1)^2$. This equation (\ref{equ:AC}) possesses two intrinsic properties: energy stability and the maximum principle. We consider the energy functional
\begin{equation}\label{equ:energy}
\mathcal{E}(u):=\int_{\mathbb{S}^{d-1}}\left(\frac{\nu^2}{2}|\nabla u|^2+F(u)\right)\text{d}\omega_d,
\end{equation}
where $\text{d}\omega_d$ is the surface measure on $\mathbb{S}^{d-1}$. That is, $\int_{\mathbb{S}^{d-1}}\text{d}\omega_d = |\mathbb{S}^{d-1}|$ denotes the surface area of $\mathbb{S}^{d-1}$. Note that the $L^2$-gradient flow $\mathcal{E}$ is a decreasing function of the time $t$ in the sense of
\begin{equation*}
\frac{\text{d}\mathcal{E}(u(t))}{\text{d}t} = -\int_{\mathbb{S}^{d-1}} |\nu^2\Delta u -F'(u)|^2\text{d}\omega(x) \leq0.
\end{equation*}
Therefore, for smooth solutions of the equation \eqref{equ:AC}, it holds that the energy decay $\mathcal{E}(u(t,\cdot))\leq \mathcal{E}(u(s,\cdot))$ for any $0\leq s\leq t <\infty$. Moreover, due to the particular structure of the Allen--Cahn equation \eqref{equ:AC}, we also have the $L^{\infty}$ maximum principle for the solution to \eqref{equ:AC}. That is, if the $L^{\infty}$ norm of $u_0$ is bounded by some constant, then that of the entire solution should also be bounded by the same constant.

\subsection{Motivations}

In this paper, we aim to propose a quadrature-based spectral method for the Allen--Cahn equation \eqref{equ:AC} on $\mathbb{S}^{d-1}$ with all numerical solutions being spherical polynomials of degree $N$. Our three-fold motivations arise from the practical simulation of the Allen--Cahn equation \eqref{equ:AC}.

\noindent \textbf{Motivation I: Time stepping size and stringent conditions.} For the Allen--Cahn equation \eqref{equ:AC} and many related phase-field models, various numerical methods have been proposed to preserve the energy stability and the sharp maximum principle. For the literature on ensuring the energy stability (or modified ones) and preserving the maximum principle in the numerical simulation of the Allen--Cahn equation \eqref{equ:AC} and related phase field models, we refer to, e.g., \cite{MR1742748,blowey1996numerical,bonnetier2012consistency,chen1998applications,MR4378430,du,MR3049920,MR3407238,feng2003numerical,MR2101784,li2022large,shen2010numerical,MR3719623,zhang2009numerical} and references therein. Although preserving both properties is highly desirable for numerical simulations, sometimes only modified energy stability can be analyzed, and some unwanted stringent conditions on the numerical schemes are always introduced. For example, in the stiff case of the diffusion coefficient $\nu \ll 1$, standard numerical methods may require extremely small time stepping sizes depending on $\nu$ to maintain stability. Moreover, numerical methods for \eqref{equ:AC} with unconditional stability for any time stepping size have been studied in the literature, e.g., \cite{bertozzi2010biharmonic,MR2815678,shen2012second}, while these methods usually rely on strict assumptions such as the derivative of the nonlinear term is Lipschitz, or the numerical solutions $u^n$ have \emph{a priori} $L^{\infty}$ bounds.

Therefore, our first motivation is to remove all these stringent and technical conditions and to develop stable numerical methods allowing larger time steps, as long-time simulations are often necessary for phase-field models. We impose some conditions onto the degree $N$ only, and the degree $N$ is independent of the time stepping size. This idea is motivated by the \emph{effective maximum principle} proposed recently by Li in \cite{MR4294331}, which is an almost sharp maximum principle and allows the numerical solutions to deviate from the sharp bound of solutions by a controllable discretization error. Without the aforementioned strict assumptions on the derivative of the nonlinear term or \emph{a priori} $L^{\infty}$ bounds of the numerical solutions, the effective maximum principle in \cite{MR4294331} is more favorable for numerical analysis of spectral methods for solving semi-linear PDEs. It is worth noting that for finite difference schemes, discrete energy stability was shown in \cite{MR3549191} to hold for $0 < \tau \leq 1/2$. This result was later extended to spectral methods for the Allen--Cahn equation \eqref{equ:AC} on the torus, where stability was proved for $0 < \tau \leq 0.86$ \cite{MR4294331}.

\noindent \textbf{Motivation II: Numerical integration and discrete orthogonal projection.} Our second motivation is concerned with the sampling process in fully discrete practical simulation, which involves the usage of orthogonal projection operators and numerical integration. The $L^2$ orthogonal projection plays an essential role in spectral Galerkin methods. On the sphere $\mathbb{S}^{d-1}$, a convenient $L^2$-orthonormal basis (with respect to $\text{d}\omega_d$) for the space $\mathbb{P}_N:=\mathbb{P}_N(\mathbb{S}^{d-1})$ of polynomials of degree at most $N$ is provided by spherical harmonics $\{Y_{\ell,k}:k=1,2,\ldots Z(d,\ell);\ell=0,1,2,\ldots,N\}$ with dimension $\dim \mathbb{P}_N = Z(d+1,N)$, where
\begin{equation*}
Z(d,0) = 1, ~ Z(d,\ell) = (2\ell +d-2)\frac{\Gamma(\ell+d-2)}{\Gamma(d-1)\Gamma(\ell+1)}\sim \frac{2}{\Gamma(d-1)}\ell^{d-2}\text{ as }\ell\rightarrow\infty;
\end{equation*}
see, e.g., \cite{MR2934227,MR0199449}. The orthogonal projection on $\mathbb{S}^{d-1}$ of $f\in L^2(\mathbb{S}^{d-1})$ is then defined as
\begin{equation}\label{equ:L2projection}
\mathcal{P}_Nf=\sum_{\ell=0}^{N}\sum_{k=1}^{Z(d,\ell)}\langle f,Y_{\ell,k}\rangle Y_{\ell,k},
\end{equation}
with the inner product defined as
\begin{equation}\label{equ:innerproduct}
\langle v,z\rangle:=\int_{\mathbb{S}^{d-1}}vz\text{d}\omega_d.
\end{equation}

In \cite{MR4294331}, the following implicit-explicit spectral scheme was proposed for the Allen--Cahn equation \eqref{equ:AC} on the torus $[0,1)^d$ ($d=1,2,3$) with periodic boundary conditions:
\begin{equation}\label{equ:prototypicalscheme}
\begin{cases}
&\dfrac{u^{n+1}-u^n}{\tau}=\nu^2\Delta u^{n+1}-\mathcal{P}_{N}\left((u^n)^3-u^n\right),\quad n\geq 0,\vspace{0.15cm}\\
& u^0 = \mathcal{P}_{N}u_0,
\end{cases}
\end{equation}
where $\tau>0$ is the size of time step, $u^n$ denotes the numerical solution at time $t=n\tau$, and the operator $\mathcal{P}_N$ projects any periodic function to its first $N$ Fourier modes. For the Allen--Cahn equation \eqref{equ:AC} on other non-periodic domains, the numerical scheme \eqref{equ:prototypicalscheme} can be similarly implemented using the $L^2$ orthogonal projection operator $\mathcal{P}_N$ mapping $L^2$ functions to $\mathbb{P}_N$. The spherical case presented in \eqref{equ:L2projection} serves as a concrete example of the projection operator on $\mathbb{S}^{d-1}$. The evolution scheme \eqref{equ:prototypicalscheme} is equivalent to the \emph{Galerkin} scheme
\begin{equation}\label{equ:ACGalerkin}
\left\langle \frac{u^{n+1}-u^n}{\tau},\chi \right\rangle = \left\langle\nu^2\Delta u^{n+1},\chi \right\rangle - \left\langle (u^n)^3-u^n, \chi  \right\rangle \quad \forall \chi \in \mathbb{P}_N;
\end{equation}
this equivalence can be immediately shown with the projection property $\mathcal{P}_N\chi=\chi$ for all $\chi\in \mathbb{P}_N$.

For practical simulations via the scheme \eqref{equ:prototypicalscheme}, however, the inner products in either the Galerkin scheme \eqref{equ:ACGalerkin} or the orthogonal projection operator \eqref{equ:L2projection} occurred in the scheme \eqref{equ:prototypicalscheme} should be evaluated by some quadrature rules. For example, an $m$-point positive-weight spherical quadrature rule takes the form of
\begin{equation}\label{equ:quad}
\sum_{j=1}^mw_jg(x_j)\approx \int_{\mathbb{S}^{d-1}}g\text{d}\omega_d,
\end{equation}
where quadrature points $x_j\in\mathbb{S}^{d-1}$ and weights $w_j>0$ for $j=1,2,\ldots,m$. For numerical integration on the sphere, we refer the reader to \cite{MR2263736,hesse2010numerical,MR2065291}. Therefore, we are motivated to incorporate the effects of numerical integration into our analysis of the scheme for solving the Allen--Cahn equation \eqref{equ:AC} on $\mathbb{S}^{d-1}$. This consideration is crucial, as an analysis of \eqref{equ:prototypicalscheme} alone may not fully capture the true behavior of the numerical solutions.

\noindent \textbf{Motivation III: Limited samples.} Conventionally, the quadrature rules are always chosen to have the exactness degree of $2N$ for numerical solutions on $\mathbb{P}_N$. That is, we have
\begin{equation}\label{equ:exactness}
\sum_{j=1}^mw_jg(x_j) = \int_{\mathbb{S}^{d-1}}g\text{d}\omega_d\quad\forall g\in\mathbb{P}_{2N}.
\end{equation}
For instance, in the numerical treatment of PDEs on one-dimensional domains, spectral methods commonly employ Gauss quadrature rules. This approach requires function evaluations at predetermined locations, known as the quadrature points, which are inherent to well-established quadrature rules.

Our third motivation then arises from the following question: What if we do not have full access to the initial data $u_0$ but only a set of samples $\{u_0(x_j)\}_{j=1}^m$, whose data locations $\{x_j\}_{j=1}^m$ cannot be prescribed? In this case, the quadrature rule \eqref{equ:quad} with points $\{x_j\}_{j=1}^m$ might not have the desired exactness \eqref{equ:exactness}, but it is still necessary to investigate behaviors of the numerical solutions obtained from these limited samples. On the one hand, such a consideration comes in line with the trend of interest in the numerical analysis community that the necessity of quadrature exactness should be re-accessed. This is because what matters in practice is the accuracy for integrating non-polynomial functions, see, e.g., \cite{an2022quadrature,trefethen2022exactness}. 

On the other hand, even when the quadrature rule \eqref{equ:quad} with exactness can be used, this investigation is still necessary. Numerical integration on surfaces differs fundamentally from integration in Euclidean spaces. While Gauss quadrature rules provide exact integration for polynomials on intervals (e.g., $[-1,1]$), and tensor product constructions extend this to higher-dimensional Euclidean domains, such approaches may not be directly applied to surfaces like the sphere due to geometric constraints. We take the sphere as an example to explain the challenge. A spherical $t$-design, introduced in \cite{delsarte1991geometriae}, is a set of points $\{x_j\}_{j=1}^m\subset \mathbb{S}^{d-1}$ with the characterization that an equal-weight quadrature rule in these points exactly integrates all polynomials of degree at most $t$, that is,
\begin{equation*}
\frac{|\mathbb{S}^{d-1}|}{m}\sum_{j=1}^m\chi(x_j)=\int_{\mathbb{S}^{d-1}}\chi(x)\text{d}\omega_d(x)\quad\forall \chi\in\mathbb{P}_t.
\end{equation*}
Therefore, the quadrature rule \eqref{equ:quad} with quadrature points as a spherical $2N$-design satisfies the quadrature exactness requirement \eqref{equ:exactness}. It was verified in \cite{MR3071504} that, for each $m\geq ct^{d-1}$ with some positive but unknown constant $c>0$, there exists a spherical $t$-design in $\mathbb{S}^{d-1}$ consisting of $m$ points. However, the distribution of a specific spherical $t$-design is still unknown. In practice, spherical $t$-designs are obtained by solving equivalent optimization problems, see, e.g., \cite{MR2763659,MR2272596,MR3822282}. While precomputed designs are available for moderate values of $t$, the computational cost becomes prohibitive for large $t$ due to the complexity of the underlying optimization. This limitation also motivates our approach of using quadrature rules with lower-degree exactness when high-degree designs are unavailable.

\subsection{New spectral scheme}

Consider discretizing the orthogonal projection operator $\mathcal{P}_N$ directly in the scheme \eqref{equ:prototypicalscheme} as
\begin{equation}\label{equ:hyperinterpolation}
\mathcal{L}_{N}f=\sum_{\ell=0}^{N}\sum_{k=1}^{Z(d,\ell)}\langle f,Y_{\ell,k}\rangle_m Y_{\ell,k},
\end{equation}
where
\begin{equation}\label{equ:discreteinnerproduct}
\langle v,z\rangle_m:=\sum_{j=1}^mw_jv(x_j)z(x_j)
\end{equation}
is a ``discrete version'' of the $L^2$ inner product \eqref{equ:innerproduct}. This is a fully discrete scheme. Note that the operator \eqref{equ:hyperinterpolation} is now always referred to as the hyperinterpolation operator, which was originally introduced by Sloan in \cite{sloan1995polynomial}. Hence, for the Allen--Cahn equation \eqref{equ:AC} on the sphere $\mathbb{S}^{d-1}$, we propose the following spectral scheme:
\begin{equation}\label{equ:scheme}
\begin{cases}
&\dfrac{u^{n+1}-u^n}{\tau}=\nu^2\Delta u^{n+1}-\mathcal{L}_{N}\left((u^n)^3-u^n\right),\quad n\geq 0,\vspace{0.15cm}\\
& u^0 = \mathcal{L}_{N}u_0.
\end{cases}
\end{equation}
It should also be mentioned that spherical harmonics are eigenfunctions of the negative Laplace--Beltrami operator on the sphere. Thus we can avoid the discretization of spatial differential operators, and the scheme \eqref{equ:scheme} is already fully discrete. Moreover, the implementation of the scheme \eqref{equ:scheme} needs to update the coefficients of the numerical solution $u^n$, and it requires only vector-matrix multiplications. During each time evolution from $n$ to $n+1$, we need to evaluate the coefficients of the hyperinterpolant $\mathcal{L}_N((u^n)^3-(u^n))$, which can be accomplished in $\dim{\mathbb{P}_N}+2m$ floating point operations (flops). We also need to update the coefficients of $u^{n+1}$, which can be done in $3\dim{\mathbb{P}_N}$ flops. Therefore, each time evolution of the scheme \eqref{equ:scheme} can be achieved in $2\left((m+1)(\dim{\mathbb{P}_N})+m\right)$ flops, and thus it has same theoretical benefits of the Galerkin method \eqref{equ:ACGalerkin} at a computational cost comparable to the collocation method.

\begin{remark}
The numerical scheme \eqref{equ:scheme} naturally extends to semi-linear PDEs \eqref{equ:PDE} on general non-spherical domains through proper definition of the hyperinterpolation operator. Such definition fundamentally requires: (i) an orthonormal basis for $\mathbb{P}_N$, and (ii) a quadrature rule. Unlike implementations on tori or spheres, this generalized setting may additionally require computation of basis polynomial derivatives for differential operator discretization.
\end{remark}

In \cite{sloan1995polynomial}, the construction of hyperinterpolation relies on the quadrature exactness \eqref{equ:exactness}. However, recent works in \cite{an2022bypassing,an2022quadrature} have relaxed and even bypassed this assumption. In this paper, the quadrature exactness \eqref{equ:exactness} is not a necessary assumption for our scheme; we only make the following three natural and simple assumptions:
\begin{assumption}\label{assumption:1MZ}
For the quadrature rule \eqref{equ:quad}, we assume that
\begin{enumerate}
\item[{\rm{(I)}}] it integrates all constants exactly; namely, $\sum_{j=1}^mw_j= \int_{\mathbb{S}^{d-1}}{\rm{d}}\omega_d = |\mathbb{S}^{d-1}|$;
\item[{\rm{(II)}}] $\{(x_j,w_j)\}_{j=1}^m$ forms a Marcinkiewicz--Zygmund (MZ) system of order 2 with respect to $\mathbb{P}_{N}$; namely, for every $N\geq 0$ and $\chi\in\mathbb{P}_{N}$, there exists a constant $\eta<1$, independent of $\chi$ and $N$, such that
 \begin{equation}\label{equ:MZproperty}
\left|\sum_{j=1}^mw_j\chi(x_j)^2-\int_{\mathbb{S}^{d-1}}\chi^2\text{d}\omega_d\right|\leq \eta \int_{\mathbb{S}^{d-1}}\chi^2\text{d}\omega_d\quad \forall \chi\in\mathbb{P}_{N};
\end{equation}
\item[$\rm{(III)}$] it converges to $\int_{\mathbb{S}^{d-1}}g{\rm{d}}\omega_d$ as $m\rightarrow \infty$ for all $g\in C(\mathbb{S}^{d-1})$.
\end{enumerate}
\end{assumption}

Assumption (I) holds if either the quadrature rule \eqref{equ:quad} is equal-weight, i.e., $w_j=|\mathbb{S}^{d-1}|/m$ for all $j=1,2,\ldots,m$, or the quadrature rule \eqref{equ:quad} has exactness degree at least one. If this assumption does not hold, we only need to replace the term $|\mathbb{S}^{d-1}|$ in our theoretical results with $\sum_{j=1}^mw_j$. Assumption (II) is equivalent to the MZ inequality, which has been intensively investigated in \cite{filbir2011marcinkiewicz,mhaskar2001spherical}. From a numerical perspective, Assumption (II) merely indicates that the relative error of evaluating the integral of $\chi^2$ via the rule \eqref{equ:quad} should be less than one for any $\chi\in\mathbb{P}_N$. Moreover, it should be noted that Assumption (II) implies $m\rightarrow\infty$, as $N\rightarrow\infty$. Assumption (III) is a natural assumption regarding the performance of quadrature rules.

\subsection{Outline}

In the paper, our main purpose is to investigate the $L^{\infty}$ stability and energy stability for the new scheme \eqref{equ:scheme}, as well as establish the effective maximum principle. In the next section, we introduce some preliminaries on spherical harmonics and the Sobolev space on spheres. In Section \ref{sec:theory}, for the scheme \eqref{equ:scheme} with quadrature rules \eqref{equ:quad} only fulfilling Assumption \ref{assumption:1MZ}, we establish the $L^{\infty}$ stability for $0<\tau<2$ and the effective maximum principle for $0<\tau\leq 1/2$. In Section \ref{sec:discussion}, we demonstrate that if the quadrature exactness \eqref{equ:exactness} is assumed, then the new scheme \eqref{equ:scheme} is equivalent to a fully discrete Galerkin method and it has discrete energy stability for $0<\tau\leq 0.86$. Moreover, if the quadrature rule \eqref{equ:quad} is assumed to have exactness degree of $4N$, we demonstrate the stability of the original energy \eqref{equ:energy}. Our theoretical assertions are verified by some numerical experiments on the unit sphere $\mathbb{S}^2$ in Section \ref{sec:example}. In Section \ref{sec:conclusion}, some conclusions are drawn and some discussions are initiated for future research.

\section{Preliminaries}

We are concerned with real-valued functions on the sphere $\mathbb{S}^{d-1}$ in the Euclidean space $\mathbb{R}^{d}$ for $d\geq 3$. For the case of $d=2$, since $\mathbb{S}^1$ can be regarded as a special case of the one-dimensional torus, we refer the reader to the case of tori in \cite{MR4294331}. For $1\leq p\leq \infty$, let $L^p(\mathbb{S}^{d-1})$ be the usual $L^p$ space equipped with the $L^p$ norm. In particular, $L^2(\mathbb{S}^{d-1})$ is a Hilbert space with the inner product $\left\langle f,g\right\rangle:=\int_{\mathbb{S}^{d-1}}fg\text{d}\omega_d$ and the induced norm $\|f\|_{\Lt}: = \sqrt{\left\langle f,f\right\rangle}$. We denote by $C(\mathbb{S}^{d-1})$ the space of continuous functions on $\mathbb{S}^{d-1}$, endowed with the uniform norm $\|f\|_{\infty}:=\text{ess}\sup_{x\in\mathbb{S}^{d-1}}|f(x)|$.

\subsection{Geometric properties of point distributions}

A critical assumption in Assumption \ref{assumption:1MZ} is that the set of $\{(x_j,w_j)\}_{j=1}^m$ is assumed to form an MZ system of order 2 with respect to $\mathbb{P}_{N}$. A natural concern is under what conditions the assumption holds. This assumption is related to the quality of distribution of quadrature points $\mathcal{X}_m:=\{x_j\}_{j=1}^m$. We define the \emph{mesh norm} $h_{\mathcal{X}_m}$ of the quadrature point set $\mathcal{X}_m\subset\mathbb{S}^{d-1}$ as
\begin{equation*}
h_{\mathcal{X}_m} : =\max _{x\in\mathbb{S}^{d-1}}\min_{x_j\in\mathcal{X}_m}\text{dist}(x,x_j),
\end{equation*}
where $\text{dist}(x,y):=\cos^{-1}(x\cdot y)$ is the geodesic distance between $x,y\in\mathbb{S}^{d-1}$. In other words, the mesh norm can be regarded as the geodesic radius of the largest hole in the mesh $\mathcal{X}_m$. Thus, it was investigated in \cite{filbir2011marcinkiewicz,mhaskar2001spherical} that the Assumption (II) in Assumption \ref{assumption:1MZ} holds if
\begin{equation}\label{equ:meshnormMZ}
N\lesssim\frac{\eta}{2h_{\mathcal{X}_{m}}}.
\end{equation}
This assumption holds even when $\mathcal{X}_m$ consists of random points. When the quadrature rule \eqref{equ:quad} is equal-weight, it was shown in \cite{MR2475947} that, if an independent random sample of $m$ points drawn from the distribution $\omega_d$, then there exists a constant $\bar{c}:=\bar{c}(\gamma)$ such that the MZ inequality \eqref{equ:MZproperty} holds with probability exceeding $1-\bar{c}N^{-\gamma}$ on the condition of $m\geq \bar{c} N ^{d-1}\log{N}/\eta^2$.

\subsection{Spherical harmonics and hyperinterpolation}

The restriction to $\mathbb{S}^{d-1}$ of a homogeneous and harmonic polynomial of total degree $\ell$ defined on $\mathbb{R}^{d}$ is called a \emph{spherical harmonic of degree $\ell$} on $\mathbb{S}^{d-1}$. We denote, as usual, by $\{Y_{\ell,k}:k = 1,2,\ldots,Z(d,\ell)\}$ a collection of $\Lt$-orthonormal real-valued spherical harmonics of exact degree $\ell$.
Besides, it is well known (see, e.g., \cite[pp. 38--39]{MR0199449}) that each spherical harmonic $Y_{\ell,k}$ of degree $\ell$ is an eigenfunction of the negative Laplace--Beltrami operator $-\Delta$ for $\mathbb{S}^{d-1}$ with eigenvalue
\begin{equation}\label{equ:LBeigenvale}
\lambda_{\ell}:=\ell(\ell+d-2).
\end{equation}

The family $\{Y_{\ell,k}\}$ of spherical harmonics forms a complete $\Lt$-orthonormal (with respect to $\omega_d$) system for the Hilbert space $\Lt(\mathbb{S}^{d-1})$. Thus, for any $f\in\Lt(\mathbb{S}^{d-1})$, it can be represented by a Laplace--Fourier series
\begin{equation*}
f(x)=\sum_{\ell=0}^{\infty}\sum_{k=1}^{Z(d,\ell)}\hat{f}_{\ell,k} Y_{\ell,k}(x)
\end{equation*}
with coefficients $\hat{f}_{\ell,k}:=\left\langle f,Y_{\ell,k}\right\rangle=\int_{\mathbb{S}^{d-1}}f(x)Y_{\ell,k}(x)\text{d}\omega_d(x)$, $\ell=0,1,2,\ldots$, and $k = 1,2,\ldots,Z(d,\ell)$.

The space $\mathbb{P}_N:=\mathbb{P}_N(\mathbb{S}^{d-1})$ of all spherical polynomials of degree at most $N$ (i.e., the restriction to $\mathbb{S}^{d-1}$ of all polynomials in $\mathbb{R}^{d}$ of degree at most $N$) coincides with the span of all spherical harmonics up to (and including) degree $N$, and its dimension satisfies $\dim\mathbb{P}_N=Z(d+1,N)=\mathcal{O}(N^{d-1})$. The space $\mathbb{P}_N$ is also a reproducing kernel Hilbert space with the reproducing kernel
\begin{equation}\label{equ:kernel}
G_N(x,y) = \sum_{\ell=0}^N\sum_{k=1}^{Z(d,\ell)}Y_{\ell,k}(x)Y_{\ell,k}(y)
\end{equation}
in the sense that $\left\langle \chi,G_N(\cdot,x) \right\rangle = \chi(x)$ for all $\chi\in\mathbb{P}_N(\mathbb{S}^{d-1})$; see, e.g., \cite{MR1115901}. The following lemma, occurred in the proof of Theorem 5.5.2 in \cite{MR1744380}, plays a critical role in our following analysis.
\begin{lemma}[\cite{MR1744380}]\label{lem:2000}
For any given point $x_0\in\mathbb{S}^{d-1}$, there holds
\begin{equation*}
\|G_N(x_0,\cdot)\|_{L^2}^2={Z(d+1,N)}/{\lvert \mathbb{S}^{d-1} \rvert}.
\end{equation*}
\end{lemma}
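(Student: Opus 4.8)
The plan is to exploit the reproducing kernel property recalled just above the statement, which collapses the $L^2$ norm of $G_N(x_0,\cdot)$ into a single diagonal value of the kernel. Since $G_N$ is symmetric in its two arguments, $G_N(x_0,\cdot)=G_N(\cdot,x_0)$ lies in $\mathbb{P}_N$, so I may take $\chi=G_N(\cdot,x_0)$ in the reproducing identity $\left\langle \chi,G_N(\cdot,x_0)\right\rangle=\chi(x_0)$. This yields at once
$$\|G_N(x_0,\cdot)\|_{L^2}^2=\left\langle G_N(\cdot,x_0),G_N(\cdot,x_0)\right\rangle=G_N(x_0,x_0).$$
Thus the whole problem reduces to showing that the diagonal value $G_N(x_0,x_0)=\sum_{\ell=0}^{N}\sum_{k=1}^{Z(d,\ell)}Y_{\ell,k}(x_0)^2$ equals $Z(d+1,N)/\lvert\mathbb{S}^{d-1}\rvert$, and in particular that it is independent of $x_0$.

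Next I would evaluate the inner sum $S_\ell(x_0):=\sum_{k=1}^{Z(d,\ell)}Y_{\ell,k}(x_0)^2$ for each fixed degree $\ell$. The key observation is that $S_\ell$ is both independent of the chosen orthonormal basis of the degree-$\ell$ eigenspace $\mathcal{H}_\ell$ and invariant under rotations of $\mathbb{S}^{d-1}$, because the rotation group acts orthogonally on each $\mathcal{H}_\ell$. Since $\mathbb{S}^{d-1}$ is homogeneous under rotations, $S_\ell$ must be constant on the sphere. To pin down the constant, I integrate: by orthonormality, $\int_{\mathbb{S}^{d-1}}S_\ell\,\mathrm{d}\omega_d=\sum_{k=1}^{Z(d,\ell)}\|Y_{\ell,k}\|_{L^2}^2=Z(d,\ell)$, whence $S_\ell\equiv Z(d,\ell)/\lvert\mathbb{S}^{d-1}\rvert$. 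Equivalently, this is the addition theorem for spherical harmonics evaluated at coincident points, using the normalization $P_\ell(1)=1$.

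Finally I would sum over $\ell$ and invoke the dimension identity $\sum_{\ell=0}^{N}Z(d,\ell)=Z(d+1,N)=\dim\mathbb{P}_N$, which is exactly the decomposition $\mathbb{P}_N=\bigoplus_{\ell=0}^{N}\mathcal{H}_\ell$ already recorded in the preliminaries. This gives
$$G_N(x_0,x_0)=\sum_{\ell=0}^{N}\frac{Z(d,\ell)}{\lvert\mathbb{S}^{d-1}\rvert}=\frac{Z(d+1,N)}{\lvert\mathbb{S}^{d-1}\rvert},$$
completing the argument. I do not expect any serious obstacle: the only nontrivial ingredient is the constancy of $S_\ell$, for which the rotational-invariance argument (or, equivalently, citing the addition theorem) is the cleanest route, while the reduction to the diagonal and the final summation follow directly from the reproducing property, orthonormality, and the stated dimension count.
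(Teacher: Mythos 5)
Your proposal is correct and takes essentially the same route as the source: the paper states Lemma \ref{lem:2000} by citation to \cite{MR1744380} without reproducing the argument, and the classical proof there is exactly your chain of reductions, namely $\|G_N(x_0,\cdot)\|_{L^2}^2=G_N(x_0,x_0)$ via the reproducing (or, even more directly, Parseval/orthonormality) property, constancy of the diagonal by the addition theorem or the rotational-invariance argument you give, and the dimension identity $\sum_{\ell=0}^{N}Z(d,\ell)=Z(d+1,N)=\dim\mathbb{P}_N$. No gaps.
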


Given $f\in C(\mathbb{S}^{d-1})$, it is often simpler in practice to express the hyperinterpolant $\mathcal{L}_Nf$ using the reproducing kernel $G_N(\cdot,\cdot)$ defined by \eqref{equ:kernel}. Rearranging the summation, we obtain
\begin{equation*}
\mathcal{L}_Nf(x) = \sum_{\ell=0}^{N}\sum_{k=1}^{Z(d,\ell)}\left(\sum_{j=1}^mw_j f(x_j)Y_{\ell,k}(x_j)\right)Y_{\ell,k}(x) = \sum_{j=1}^mw_jf(x_j)G_N(x,x_j).
\end{equation*}

\begin{lemma}\label{lem:hypernorm}
The norm of the hyperinterpolation operator constructed using quadrature rules \eqref{equ:quad} fulfilling Assumption \ref{assumption:1MZ} in the setting of $C(\mathbb{S}^{d-1})$ to $C(\mathbb{S}^{d-1})$ is bounded by
\begin{equation}\label{equ:uniformnormqmcdesign}
\|\mathcal{L}_N\|_{\infty}:=\sup_{f\in C(\Omega)}\frac{\|\mathcal{L}_Nf\|_{\infty}}{\|f\|_{\infty}}=\mathcal{O}\left(\sqrt{1+\eta}N^{\frac{d-1}{2}}\right).
\end{equation}
\end{lemma}
\begin{proof}
It was derived in \cite{MR1744380} that $\|\mathcal{L}_N\|_{\infty}\leq \lvert \mathbb{S}^{d-1}\rvert ^{1/2} \left(\sum_{j=1}^mw_jG_N(x_0,x_j)^2\right)^{1/2},$ where $x_0\in \mathbb{S}^{d-1}$ is a certain point. Recall that $\{(x_j,w_j)\}_{j=1}^m$ forms an MZ system of order 2 (Assumption II). Thus, we have
\begin{equation*}\begin{split}
\|\mathcal{L}_N\|_{\infty}
\leq& \lvert \mathbb{S}^{d-1}\rvert ^{1/2} \left( (1+\eta)\int_{\mathbb{S}^{d-1}}G_N(x_0,x)^2\text{d}\omega_d(x)    \right)^{1/2}\\
\leq&  \lvert \mathbb{S}^{d-1}\rvert ^{1/2} \sqrt{1+\eta}\|G_N(x_0,\cdot)\|_{L^2} \\
\leq & \sqrt{1+\eta}(\dim\mathbb{P}_N)^{1/2}=\mathcal{O}\left(\sqrt{1+\eta}N^{\frac{d-1}{2}}\right),
\end{split}
\end{equation*}
where in the last inequality we use Lemma \ref{lem:2000}.
\end{proof}

\begin{remark}\label{rem:historical}
The following historical note partly explains the impact of discretizing the inner products \eqref{equ:innerproduct} via some quadrature rules \eqref{equ:quad}. The uniform operator norm of $\mathcal{P}_{N}$ satisfies $\|\mathcal{P}_N\|_{\infty} \asymp N^{\frac{d-2}{2}}$,
where $a_N\asymp b_N$ denotes that there exist $c_1,c_2>0$ independent of $N$ such that $c_1a_N\leq b_N\leq c_2b_N$, and the case of $\mathbb{S}^2$ $(d=3)$ can be dated back to Gronwall \cite{MR1500962}. However, the uniform norm $\|\mathcal{L}_N\|_{\infty}$ of the hyperinterpolation operator constructed using quadrature rules \eqref{equ:quad} with quadrature exactness \eqref{equ:exactness} is bounded as $\|\mathcal{L}_N\|_{\infty}=\mathcal{O}(n^{\frac{d-1}{2}})$. That is, the growth rate of the uniform norm $\|\mathcal{L}_N\|_{\infty}$ of the hyperinterpolation operator with quadrature exactness \eqref{equ:exactness}, as shown in \cite{MR1744380}, is worse by a factor of $n^{1/2}$ than the optimal result for $\mathcal{P}_N$. Only for the special case of $d=3$ and under a mild additional assumption on the quadrature rule \eqref{equ:quad}, the improved result of $\|\mathcal{L}_{N}\|_{\infty} \asymp n^{1/2}$ was achieved in \cite{MR1744380}.
\end{remark}

\subsection{Sobolev spaces}
The study of hyperinterpolation in a Sobolev space setting can be traced back to the work \cite{MR2274179} by Hesse and Sloan. We define the Sobolev space for $s\geq 0$ as the set of all functions $f\in L^2(\mathbb{S}^{d-1})$ whose Laplace--Fourier coefficients satisfy
\begin{equation*}
\sum_{\ell=0}^{\infty}\sum_{k=1}^{Z(d,\ell)}(1+\lambda_{\ell})^s\lvert \hat{f}_{\ell,k}\rvert^2<\infty,
\end{equation*}
where $\lambda_{\ell}$ is given as \eqref{equ:LBeigenvale}. When $s=0$, we have $H^0(\mathbb{S}^{d-1})=L^2(\mathbb{S}^{d-1})$. The norm in $\Hs(\mathbb{S}^{d-1})$ is therefore defined as
\begin{equation*}
\|f\|_{\Hs}:=\left(\sum_{\ell=0}^{\infty}\sum_{k=1}^{Z(d,\ell)}(1+\lambda_{\ell})^s\lvert \hat{f}_{\ell,k}\rvert^2\right)^{1/2}.
\end{equation*}

The following lemma is necessary for our analysis, which was first presented in \cite{MR2274179}.
\begin{lemma}\label{lem:hsl2}
For any $f\in\mathbb{P}_N$, $\|f\|_{\Hs}\leq c_1 N^s\|f\|_{\Lt}$, where $c_1>0$ is a constant.
\end{lemma}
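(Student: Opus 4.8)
The statement is the standard inverse (Bernstein-type) inequality for spherical polynomials, and I expect it to follow directly from the spectral definition of the $\Hs$ norm together with the fact that a polynomial of degree at most $N$ carries no Laplace--Fourier content beyond $\ell=N$. The plan is therefore purely algebraic: truncate the series, extract the largest eigenvalue weight, and recognize the leftover as $\|f\|_{\Lt}^2$ via Parseval.

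First I would write out $\|f\|_{\Hs}^2=\sum_{\ell=0}^{N}\sum_{k=1}^{Z(d,\ell)}(1+\lambda_{\ell})^s\lvert\hat{f}_{\ell,k}\rvert^2$, where the outer sum truncates at $\ell=N$ precisely because $f\in\mathbb{P}_N$ forces $\hat{f}_{\ell,k}=0$ for $\ell>N$. Since $s\geq 0$ and the eigenvalues $\lambda_{\ell}=\ell(\ell+d-2)$ from \eqref{equ:LBeigenvale} are increasing in $\ell$, the weight $(1+\lambda_{\ell})^s$ is maximized over the index range at $\ell=N$; hence I can replace it by the uniform bound $(1+\lambda_{\ell})^s\leq(1+\lambda_N)^s$ and pull this constant out of the double sum. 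The remaining factor $\sum_{\ell=0}^{N}\sum_{k}\lvert\hat{f}_{\ell,k}\rvert^2$ equals $\|f\|_{\Lt}^2$ by Parseval's identity, yielding $\|f\|_{\Hs}^2\leq(1+\lambda_N)^s\|f\|_{\Lt}^2$.

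Finally I would estimate the eigenvalue: for $N\geq 1$ one has $\lambda_N=N(N+d-2)\leq(d-1)N^2$, so $1+\lambda_N\leq dN^2$ and thus $(1+\lambda_N)^s\leq d^s N^{2s}$. Taking square roots gives $\|f\|_{\Hs}\leq d^{s/2}N^s\|f\|_{\Lt}$, which is the claim with $c_1=d^{s/2}$; the case $N=0$ is trivial since then $f$ is constant, $\lambda_0=0$, and the two norms coincide.

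I do not anticipate a genuine obstacle here, as the argument is a short chain of monotonicity and Parseval. The only points deserving a word of care are that the monotonicity of $t\mapsto t^s$ on $[0,\infty)$ requires the standing assumption $s\geq 0$, and that the constant $c_1$ produced this way depends on $d$ and $s$ but not on $N$, exactly as the statement demands. If one wished to emphasize the sharp power, it is worth noting that the factor $N^s$ (rather than $N^{2s}$) arises precisely because $\lambda_N\sim N^2$ enters the squared $\Hs$ norm, so the square root converts the quadratic eigenvalue growth into the linear-in-$s$ exponent on $N$.
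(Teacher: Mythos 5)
Your proof is correct and there is essentially nothing to compare it against: the paper offers no proof of Lemma \ref{lem:hsl2}, simply citing Hesse and Sloan \cite{MR2274179}, and your truncation-plus-Parseval argument (truncate the Laplace--Fourier expansion at $\ell=N$, bound $(1+\lambda_\ell)^s\le(1+\lambda_N)^s$ by monotonicity, use Parseval, and estimate $1+\lambda_N\le dN^2$) is exactly the standard proof found there. One small caveat: your handling of $N=0$ is backwards --- the two norms indeed coincide for constants, but then the claimed bound reads $\|f\|_{\Hs}\le c_1\cdot 0^s\,\|f\|_{\Lt}=0$ for $s>0$, so the inequality as literally stated fails for nonzero constants, and the lemma should be read with $N\ge 1$ (or with $N^s$ replaced by $(1+N)^s$), which is consistent with how the paper uses it in the large-$N$ regime.
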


Denote $\mathcal{L}_{>N}:=I-\mathcal{L}_N$. Based on Lemma \ref{lem:hsl2}, we study $\mathcal{L}_{>N}$ in the $\|\cdot\|_{\infty}$ sense in the following lemma.

\begin{lemma}\label{lem:Pi>N}
 Given $f\in C(\mathbb{S}^{d-1})$ and $t>\frac{d-1}{2}$, the stability of $\mathcal{L}_{>N}$ as an operator from $C(\mathbb{S}^{d-1})$ to $C(\mathbb{S}^{d-1})$ can be controlled by
\begin{equation*}
\|\mathcal{L}_{>N}f\|_{\infty}\leq\left(1+\|\mathcal{L}_N\|_{\infty}\right)E_N(f)+c_2\eta N^{t}\|\chi^*\|_{\Lt},
\end{equation*}
where $c_2>0$ is a constant only depending on $\eta$, $E_{N}(f)=\inf_{\chi\in\mathbb{P}_{N}}\|f-\chi\|_{\infty}$ denotes the best uniform approximation error of $f$ in $\mathbb{P}_{N}$, $\chi^*\in\mathbb{P}_N$ is the best approximation of $f$ in $\mathbb{P}_N$ such that $\|f-\chi^*\|_{\infty}=E_N(f)$. Furthermore, we have
\begin{equation}\label{equ:hyperinftyerror2}
\|\mathcal{L}_{>N}f\|_{\infty}\leq\left(1+\|\mathcal{L}_N\|_{\infty}+c_2\eta N^{t}\right)E_N(f)+c_2\eta N^{t}\|f\|_{\infty}.
\end{equation}
\end{lemma}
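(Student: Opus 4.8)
The plan is to write $\mathcal{L}_{>N}f = f - \mathcal{L}_N f$ and split off the best uniform approximant $\chi^*\in\mathbb{P}_N$ through the algebraic identity
\[
f - \mathcal{L}_N f = (f-\chi^*) - \mathcal{L}_N(f-\chi^*) + (\chi^* - \mathcal{L}_N\chi^*).
\]
The first two terms are harmless: by definition $\|f-\chi^*\|_\infty = E_N(f)$, and Lemma~\ref{lem:hypernorm} gives $\|\mathcal{L}_N(f-\chi^*)\|_\infty \le \|\mathcal{L}_N\|_\infty E_N(f)$, so together they yield the announced $(1+\|\mathcal{L}_N\|_\infty)E_N(f)$. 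The entire difficulty is thus concentrated in the third term $\|\chi^*-\mathcal{L}_N\chi^*\|_\infty$, which quantifies how far hyperinterpolation is from reproducing a polynomial once quadrature exactness \eqref{equ:exactness} is dropped; under exactness this term vanishes, but here it is precisely what the MZ condition must control.

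To estimate it I would first work in $L^2$. Set $E(\phi,\psi):=\langle\phi,\psi\rangle - \langle\phi,\psi\rangle_m$ for $\phi,\psi\in\mathbb{P}_N$; this is a symmetric bilinear form, and Assumption~(II) in Assumption~\ref{assumption:1MZ} says exactly that $|E(\psi,\psi)|\le\eta\|\psi\|_{L^2}^2$. Representing $E$ by a self-adjoint operator on the finite-dimensional inner-product space $\mathbb{P}_N$, the diagonal bound is the operator-norm bound, which upgrades by Cauchy--Schwarz to the off-diagonal estimate $|E(\phi,\psi)|\le\eta\|\phi\|_{L^2}\|\psi\|_{L^2}$. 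Since $\chi^*-\mathcal{L}_N\chi^* = \sum_{\ell,k}E(\chi^*,Y_{\ell,k})Y_{\ell,k}$, writing $g:=\chi^*-\mathcal{L}_N\chi^*$ and using linearity of $E$ in its second argument gives the self-referential identity $\|g\|_{L^2}^2 = \sum_{\ell,k}E(\chi^*,Y_{\ell,k})\hat g_{\ell,k} = E(\chi^*,g)$, whence
\[
\|\chi^*-\mathcal{L}_N\chi^*\|_{L^2} = \|g\|_{L^2} \le \eta\|\chi^*\|_{L^2}.
\]

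Finally I would lift this $L^2$ bound to $L^\infty$ on the polynomial $g\in\mathbb{P}_N$. The Sobolev embedding $H^t(\mathbb{S}^{d-1})\hookrightarrow C(\mathbb{S}^{d-1})$, valid because $t>\frac{d-1}{2}$, gives $\|g\|_\infty\lesssim\|g\|_{H^t}$, and the inverse inequality of Lemma~\ref{lem:hsl2} gives $\|g\|_{H^t}\le c_1 N^t\|g\|_{L^2}$; combining these with the $L^2$ bound yields $\|\chi^*-\mathcal{L}_N\chi^*\|_\infty\le c_2\eta N^t\|\chi^*\|_{L^2}$ and hence the first asserted inequality. The second inequality then follows from the crude estimates $\|\chi^*\|_{L^2}\le|\mathbb{S}^{d-1}|^{1/2}\|\chi^*\|_\infty$ and $\|\chi^*\|_\infty\le\|f-\chi^*\|_\infty+\|f\|_\infty = E_N(f)+\|f\|_\infty$, after absorbing $|\mathbb{S}^{d-1}|^{1/2}$ into $c_2$ and collecting the coefficient of $E_N(f)$.

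I expect the genuine obstacle to be the middle paragraph: converting the \emph{scalar, diagonal} MZ inequality of Assumption~(II) into a bound on the \emph{full} hyperinterpolation error operator with the clean constant $\eta$. The two ingredients that make this work are the polarization of $E$ to its sharp off-diagonal form and the identity $\|g\|_{L^2}^2=E(\chi^*,g)$; a naive coefficientwise estimate $|E(\chi^*,Y_{\ell,k})|\le\eta\|\chi^*\|_{L^2}$ summed over the $\dim\mathbb{P}_N$ modes would instead lose a spurious factor $(\dim\mathbb{P}_N)^{1/2}=\mathcal{O}(N^{(d-1)/2})$ and destroy the factor $\eta$ in the final estimate.
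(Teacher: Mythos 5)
Your proof is correct, and its skeleton is the same as the paper's: the identity $f-\mathcal{L}_Nf=(f-\chi)-\mathcal{L}_N(f-\chi)-(\mathcal{L}_N\chi-\chi)$ with $\chi=\chi^*$, followed by lifting the polynomial remainder $\chi^*-\mathcal{L}_N\chi^*$ from $L^2$ to $L^\infty$ via the Sobolev embedding $H^t(\mathbb{S}^{d-1})\hookrightarrow C(\mathbb{S}^{d-1})$ and the inverse inequality of Lemma~\ref{lem:hsl2}, and finally $\|\chi^*\|_\infty\leq E_N(f)+\|f\|_\infty$ for \eqref{equ:hyperinftyerror2}. The one genuine divergence is the middle step, which you correctly identified as the crux: the paper does not prove the $L^2$ bound but imports it from \cite{an2022bypassing} in the form $\|\mathcal{L}_N\chi-\chi\|_{L^2}^2\leq(\eta^2+4\eta)\|\chi\|_{L^2}^2$, whereas you prove it from scratch by polarizing the quadrature-error form $E(\phi,\psi)=\langle\phi,\psi\rangle-\langle\phi,\psi\rangle_m$: since $E$ is symmetric on $\mathbb{P}_N$ and Assumption~(II) bounds its diagonal, the self-adjoint representer has operator norm at most $\eta$, and the identity $\|g\|_{L^2}^2=E(\chi^*,g)$ for $g=\chi^*-\mathcal{L}_N\chi^*$ closes the estimate. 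Your route is self-contained and in fact sharper: you get $\|\chi^*-\mathcal{L}_N\chi^*\|_{L^2}\leq\eta\|\chi^*\|_{L^2}$ rather than $\sqrt{\eta^2+4\eta}\,\|\chi^*\|_{L^2}$, so your constant $c_2$ is absolute, while in the paper $c_2\eta=c_1\sqrt{\eta^2+4\eta}$ forces $c_2$ to depend on $\eta$ (this is why the lemma states that dependence); both versions suffice for the regime $\eta=\tilde{c}N^{-\varepsilon}$ used downstream in Section~\ref{sec:theory}. Your closing remark is also on target: a coefficientwise bound on $E(\chi^*,Y_{\ell,k})$ summed over modes would lose a factor $(\dim\mathbb{P}_N)^{1/2}=\mathcal{O}(N^{(d-1)/2})$, which would wreck the $N$-threshold arguments in Theorems~\ref{thm:uniformstability1}--\ref{thm:uniformstability2}.
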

\begin{proof}
For any $\chi\in\mathbb{P}_{N}$, we have $\mathcal{L}_{>N}f=f-\mathcal{L}_{N}f = f-\chi-\mathcal{L}_{N}(f-\chi) - (\mathcal{L}_N\chi -\chi)$, and hence
\begin{equation*}\begin{split}
\|f-\mathcal{L}_{N}f\|_{\infty}\leq\|f-\chi\|_{\infty}+\|\mathcal{L}_{N}\|_{\infty}\|f-\chi\|_{\infty} + \|\mathcal{L}_N\chi -\chi\|_{\infty}.
\end{split}\end{equation*}
Since this holds for arbitrary $\chi\in\mathbb{P}_{N}$, we have
\begin{equation*}
\|\mathcal{L}_N\chi -f\|_{\infty}\leq\left(1+\|\mathcal{L}_N\|_{\infty}\right)E_N(f) + \|\mathcal{L}_N\chi^* -\chi^*\|_{\infty}.
\end{equation*}
Then, we control the term $\|\mathcal{L}_N\chi^* -\chi^*\|_{\infty}$ with the aid of the Sobolev embedding of $H^t(\mathbb{S}^{d-1})$ into $C(\mathbb{S}^{d-1})$ for any $t>\frac{d-1}{2}$. Note that $\|\mathcal{L}_N\chi-\chi\|_{L^2}^2\leq(\eta^2+4\eta)\|\chi\|_{L^2}^2$ for any $\chi\in\mathbb{P}_N$, which was proved in \cite{an2022bypassing}. Thus, it follows from Lemma \ref{lem:hsl2} that
\begin{equation*}\begin{split}
\|\mathcal{L}_N\chi^* -\chi^*\|_{\infty}&\lesssim \|\mathcal{L}_N\chi^* -\chi^*\|_{H^t} \leq c_1N^t \|\mathcal{L}_N\chi^* -\chi^*\|_{L^2}\\
&\leq  c_1N^t\sqrt{\eta^2+4\eta} \|\chi^*\|_{L^2}\leq c_2\eta N^t.
\end{split}\end{equation*}
The estimate \eqref{equ:hyperinftyerror2} is immediately obtained by noting that $\|\chi^*\|_{\infty}\leq \|f\|_{\infty} + E_N(f)$.
\end{proof}

\section{$L^{\infty}$ stability and effective maximum principle}\label{sec:theory}

We now study the $L^{\infty}$ stability and effective maximum principle of the spectral scheme \eqref{equ:scheme} with quadrature rules \eqref{equ:quad} fulfilling Assumption \ref{assumption:1MZ} for the Allen--Cahn equation \eqref{equ:AC} on $\mathbb{S}^{d-1}\subset\mathbb{R}^d$. A key observation is that, for $f\in H^{s}(\mathbb{S}^{d-1})$ with $s> \frac{d-1}{2}$, the best approximation error $E_N(f)$ in $\mathbb{P}_N$ can be bounded as
\begin{equation*}
E_N(f)\leq \frac{c_3(f)}{N^{s-\frac{d-1}{2}}}\|f\|_{H^s},
\end{equation*}
where $c_3(f)>0$ is some constant depending on $f$. Such an error rate can be obtained by \cite{MR288468}, together with the Sobolev embedding into H\"older spaces.

\subsection{The case of $0<\tau\leq 1/2$}
We first consider the case of $0<\tau\leq 1/2$.

\begin{theorem}[$L^{\infty}$ stability for $0<\tau\leq 1/2$]\label{thm:uniformstability1}
Let $0<\alpha_0\leq 1$, $0< \tau\leq 1/2$, and $s_0$ be a constant marginally larger than $(d-1)/2$. Assume $u_0\in H^{s}(\mathbb{S}^{d-1})$ with $s> d-1$ and $\|u_0\|_{\infty}\leq 1$. If $\eta= \tilde{c} N^{-\varepsilon}$ for any $\tilde{c}\geq0$ and $\varepsilon >s_0$ and $N\geq N_1:=N_1\left(\alpha_0,\nu,s,d,u_0,\varepsilon\right)$, then
\begin{equation*}
\sup_{n\geq 0} \|u^n\|_{\infty}\leq 1+\alpha_0.
\end{equation*}
\end{theorem}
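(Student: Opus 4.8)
The plan is to argue by induction on $n$, showing that the bound $\|u^n\|_\infty\le 1+\alpha_0$ propagates from step $n$ to step $n+1$. Writing $\beta:=1+\alpha_0$ and rearranging the scheme \eqref{equ:scheme} as $(I-\tau\nu^2\Delta)u^{n+1}=u^n-\tau\mathcal{L}_N f(u^n)$ with $f(u)=u^3-u$, the first observation I would use is that the resolvent $(I-\tau\nu^2\Delta)^{-1}$ is a Markov operator on $\mathbb{S}^{d-1}$: since $-\Delta$ has nonnegative spectrum $\{\lambda_\ell\}$ and generates a positivity-preserving, constant-preserving heat semigroup, the resolvent has a nonnegative zonal kernel of unit mass, so that $\|(I-\tau\nu^2\Delta)^{-1}v\|_\infty\le\|v\|_\infty$. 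Because $u^n-\tau\mathcal{L}_N f(u^n)\in\mathbb{P}_N$ and the resolvent preserves $\mathbb{P}_N$, this reduces the task to bounding $\|u^n-\tau\mathcal{L}_N f(u^n)\|_\infty$.

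The base case is handled directly: $\|u^0\|_\infty=\|\mathcal{L}_N u_0\|_\infty\le\|u_0\|_\infty+\|\mathcal{L}_{>N}u_0\|_\infty\le 1+\|\mathcal{L}_{>N}u_0\|_\infty$, and I would estimate $\|\mathcal{L}_{>N}u_0\|_\infty$ with Lemma \ref{lem:Pi>N} using $t=s_0$. The regularity $u_0\in H^s$ with $s>d-1$ gives $E_N(u_0)\lesssim N^{-(s-(d-1)/2)}\|u_0\|_{H^s}$, so the term $(1+\|\mathcal{L}_N\|_\infty)E_N(u_0)\lesssim N^{d-1-s}\to0$, while the choice $\eta=\tilde c N^{-\varepsilon}$ with $\varepsilon>s_0$ forces the remaining terms, of size $c_2\eta N^{s_0}(\|u_0\|_\infty+E_N(u_0))\lesssim N^{s_0-\varepsilon}$, to vanish. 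Hence $\|\mathcal{L}_{>N}u_0\|_\infty\le\alpha_0$ once $N\ge N_1$, giving the base case.

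For the inductive step I would split
\begin{equation*}
u^n-\tau\mathcal{L}_N f(u^n)=\big(u^n-\tau f(u^n)\big)+\tau\,\mathcal{L}_{>N}f(u^n),
\end{equation*}
so that $\|u^{n+1}\|_\infty\le\|g(u^n)\|_\infty+\tau\|\mathcal{L}_{>N}f(u^n)\|_\infty$, where $g(s)=(1+\tau)s-\tau s^3$ acts pointwise. The first term is purely scalar: using $\|u^n\|_\infty\le\beta$ from the inductive hypothesis, I would show by elementary calculus that for $0<\tau\le1/2$ and $1\le\beta\le2$ the odd map $g$ satisfies $\max_{|s|\le\beta}|g(s)|\le\beta$, with a strictly positive slack $\rho=\rho(\tau,\alpha_0)$; tracking the two regimes according to whether the critical point $s_\ast=\sqrt{(1+\tau)/(3\tau)}$ lies inside or outside $[0,\beta]$ shows that in the small-$\tau$ regime the slack is exactly $\tau\beta(\beta^2-1)$, which carries the same factor $\tau$ as the remainder, while near $\tau=1/2$ it is at least a fixed positive multiple of $\alpha_0$. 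Matching these, it suffices to guarantee $\|\mathcal{L}_{>N}f(u^n)\|_\infty\le\delta(\alpha_0)$ for a fixed $\delta(\alpha_0)>0$.

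The main obstacle is precisely this uniform bound on the nonlinear hyperinterpolation remainder. Applying Lemma \ref{lem:Pi>N} to $f(u^n)$ controls $\|\mathcal{L}_{>N}f(u^n)\|_\infty$ by $(1+\|\mathcal{L}_N\|_\infty)E_N(f(u^n))$ plus terms of size $O(\eta N^{s_0})=o(1)$; the difficulty is that $f(u^n)=(u^n)^3-u^n\in\mathbb{P}_{3N}$, so its best approximation error $E_N(f(u^n))$ is \emph{not} small merely because $\|u^n\|_\infty\le\beta$. To make $N^{(d-1)/2}E_N(f(u^n))\to0$ I would need a bound on $\|f(u^n)\|_{H^s}$ that is uniform in both $n$ and $N$; by the algebra property of $H^s$ for $s>(d-1)/2$ this reduces to a uniform regularity bound $\|u^n\|_{H^s}\le M(u_0)$. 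I therefore expect the proof to run as a simultaneous induction carrying both $\|u^n\|_\infty\le\beta$ and $\|u^n\|_{H^s}\le M(u_0)$, the latter being propagated by exploiting the strict high-frequency damping of the multiplier $(1+\tau\nu^2\lambda_\ell)^{-1}$ against the frequency growth injected by the cubic term (a low/high frequency split in which the $L^\infty$ bound controls the low modes). Establishing this regularity propagation using only the MZ stability of $\mathcal{L}_N$, with no quadrature-exactness assumption, is the crux; once it is in hand, choosing $N_1=N_1(\alpha_0,\nu,s,d,u_0,\varepsilon)$ large enough to absorb the remainder into the slack $\rho$ closes the induction.
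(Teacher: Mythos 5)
Your proposal is correct in outline and follows the paper's overall strategy — induction on $n$, the $L^\infty$ contraction (maximum principle) of the resolvent $(I-\tau\nu^2\Delta)^{-1}$, the base case via Lemma \ref{lem:Pi>N} with $E_N(u_0)\lesssim N^{-(s-\frac{d-1}{2})}\|u_0\|_{H^s}$ and $\eta N^{s_0}=\tilde{c}N^{s_0-\varepsilon}\to 0$ — but your inductive step is organized differently. The paper substitutes $u^n=1+\zeta^n$ (and repeats with $-u^n$), which turns the update into $(1-\tau\nu^2\Delta)\zeta^{n+1}=(1-2\tau)\zeta^n-\tau(\zeta^n)^2(3+\zeta^n)+\tau\mathcal{L}_{>N}\bigl(2\zeta^n+3(\zeta^n)^2+(\zeta^n)^3\bigr)$; the middle term is nonpositive precisely because $\alpha_0\le 1$ gives $3+\zeta^n\ge 1-\alpha_0\ge 0$, so $\max\zeta^{n+1}\le(1-2\tau)\alpha_0+\tau\|\mathcal{L}_{>N}(\cdots)\|_\infty$ and the remainder only needs to be at most $2\alpha_0$. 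You instead bound the scalar map $g(s)=(1+\tau)s-\tau s^3$ on $[-\beta,\beta]$ with positive slack — in effect the $0<\tau\le 1/2$ analogue of Lemma \ref{lem:dongli}, which the paper reserves for the $1/2<\tau<2$ regime in Theorem \ref{thm:uniformstability2}. Your slack bookkeeping checks out: in the monotone regime the slack is $\tau\beta(\beta^2-1)$, carrying the same factor $\tau$ as the remainder so the threshold on $N$ is $\tau$-independent, and at the case boundary $s_*=\beta$ both expressions agree at $\tau_\beta\beta(\beta^2-1)>0$, so a fixed $\delta(\alpha_0)$ suffices. The trade-off: the paper's shifted decomposition exposes the contraction factor $\theta=1-2\tau$ on the excess $\alpha_n$, which is reused verbatim to prove the effective maximum principle (Theorem \ref{thm:effectivemaxprinciple1}); your uniform bound alone would not deliver that geometric decay.

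On the one ingredient you explicitly leave open — the uniform-in-$n$ bound $\|u^n\|_{H^s}\le M(u_0)$ needed to make $E_N(f(u^n))\lesssim N^{-(s-\frac{d-1}{2})}$ small — your anticipated mechanism is exactly the paper's. The paper iterates the scheme as $u^{n+1}=T_0^{J+1}u^{n-J}-\tau\sum_{j=1}^{J}T_0^{j+1}\mathcal{L}_N(f(u^{n-j}))-\tau T_0\mathcal{L}_N(f(u^n))$ with $T_0=(I-\tau\nu^2\Delta)^{-1}$ and invokes the discrete smoothing estimate of Li (the reference \cite{li2021stability}) to upgrade the uniform $L^2$ bounds on $u^j$ and $\mathcal{L}_N(f(u^j))$ — which follow from the inductive $L^\infty$ bound together with the MZ property (II), no exactness required — into $\sup_{1\le j\le n}\|u^j\|_{H^s}\le c_{\nu,u_0,s,d}$. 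So the regularity propagation is dispatched by citation to the damping of the multiplier $(1+\tau\nu^2\lambda_\ell)^{-1}$ rather than by the simultaneous two-norm induction you sketch; note also that the paper sidesteps the apparent circularity you would face by observing that the $H^s$ bound at step $n+1$ is only needed for the \emph{next} inductive step, hence can be verified after $\|u^{n+1}\|_\infty\le 1+\alpha_0$ is established. With that estimate taken as given, your argument closes the induction just as the paper's does.
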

\begin{proof}
This theorem is proved by induction.

Step 1: Initial data. As $\mathcal{L}_{>N}=I-\mathcal{L}_N$, by Lemmas \ref{lem:hypernorm} and \ref{lem:Pi>N} we have
\begin{equation*}\begin{split}
\|\mathcal{L}_Nu_0\|_{\infty}
\leq & \|u_0\|_{\infty}+\|\mathcal{L}_{>N}u_0\|_{\infty}\leq  1+(1+\|\mathcal{L}_N\|_{\infty})E_N(u_0)+c_2\eta N^{s_0}\|\chi^*\|_{\Lt}\\
\leq & 1+c_3\left(1+\sqrt{1+\eta}(\dim{\mathbb{P}_N})^{1/2})\right)N^{-(s-\frac{d-1}{2})}\|u_0\|_{H^{s}} +c_2\eta N^{s_0}\|\chi^*\|_{\Lt}\\
\leq & 1+\tilde{c}_3(1+\sqrt{2}N^{\frac{d-1}{2}})N^{-(s-\frac{d-1}{2})}\|u_0\|_{H^{s}} +c_2\tilde{c} N^{-\varepsilon+s_0}\|\chi^*\|_{\Lt}\\
\leq & 1+\alpha_0,
\end{split}\end{equation*}
if $N\geq N_1'(\alpha_0,s,d,\|u_0\|_{H^{s}},\varepsilon)$ is large enough such that
\begin{equation}\label{equ:positivealpha}
\tilde{c}_3(1+\sqrt{2}N^{\frac{d-1}{2}})N^{-(s-\frac{d-1}{2})}\|u_0\|_{H^{s}} +c_2\tilde{c} N^{-\varepsilon+s_0}\|\chi^*\|_{\Lt}\leq \alpha_0,
\end{equation}
where $\tilde{c}_3>0$ is a constant stemming from $\dim{\mathbb{P}}_N=\mathcal{O}(N^{d-1})$.

Step 2: Induction. The inductive assumption is $\|u^n\|_{\infty}\leq 1+\alpha_0$. We intend to show 
\begin{equation*}
u^{n+1}\leq 1+\alpha_0.
\end{equation*}
Afterwards, repeating the argument for $-u^{n+1}$ gives $-(1+\alpha_0)\leq u^{n+1}$. Thus, we have $\|u^{n+1}\|_{\infty}\leq 1+\alpha_0$. Note that the scheme \eqref{equ:scheme} is equivalent to
\begin{equation*}
(1-\tau\nu^2\Delta)u^{n+1}=u^n + \tau\mathcal{L}_N\left(u^n-(u^n)^3\right).
\end{equation*}
Denote $u^{n}:=1+\zeta^{n}$. Thus, the inductive assumption implies 
\begin{equation*}
-(2+\alpha_0)\leq \zeta^n\leq \alpha_0.
\end{equation*}
For $\zeta^{n+1}:=u^{n+1}-1$, we have
\begin{equation*}\begin{split}
(1-\tau\nu^2\Delta)\zeta^{n+1}
& = \zeta^n +\tau\mathcal{L}_N\left((1+\zeta^n)-(1+\zeta^n)^3\right)  \\
& = \zeta^n + \tau\mathcal{L}_N\left( -2\zeta^n-3(\zeta^n)^2-(\zeta^n)^3\right)\\
& = \zeta^n + \tau\left( -2\zeta^n-3(\zeta^n)^2-(\zeta^n)^3 -    \mathcal{L}_{>N}( -2\zeta^n-3(\zeta^n)^2-(\zeta^n)^3)\right)\\
&= (1-2\tau)\zeta^n  - \tau(\zeta^n)^2(3+\zeta^n) + \tau\mathcal{L}_{>N}(2\zeta^n+3(\zeta^n)^2+(\zeta^n)^3).
\end{split}\end{equation*}
To proceed, we first note that $3+\zeta^n\geq 1-\alpha_0\geq 0$. Besides, since $\sup_{1\leq j\leq n}\|u^{j}\|_{\infty}\leq 1+\alpha_0$, we have $\sup_{1\leq j\leq n}(\|u^{j}\|_{L^2}+\|\mathcal{L}_N(f(u^j))\|_{L^2})$ is bounded by a fixed constant, and then $\sup_{1\leq j\leq n}\|u^{j}\|_{H^s}\leq c_{\nu,u_0,s,d}$,
where $c_{\nu,u_0,s,d}$ is some constant depending only on $\mu$, $u_0$, $s$ and $d$. This bound on $\|u^{j}\|_{H^s}$ can be shown by using the discrete smoothing estimate (cf. \cite{li2021stability}) to the following iterated scheme
\begin{equation*}\begin{split}
u^{n+1}
&= (I-\tau\nu^2\Delta)^{-1}u^n -(I-\tau\nu^2\Delta)^{-1}\tau\mathcal{L}_N(f(u^n))\\
&=: T_0u^n -\tau T_0\mathcal{L}_N(f(u^n))\\
&= T_0^{J+1}u^{n-J} -\tau\sum_{j=1}^JT_0^{j+1}\mathcal{L}_N(f(u^{n-j})) - \tau T_0\mathcal{L}_N(f(u^n)),
\end{split}\end{equation*}
where $T_0 := (I-\tau\nu^2\Delta)^{-1}$. As the following analysis ensures $\sup_{1\leq j\leq n+1}\|u^{j}\|_{\infty}\leq 1+\alpha_0$, we also have $\|u^{n+1}\|_{H^s}\leq c_{\nu,u_0,s,d}$ in the next iteration. By the maximum principle and Lemmas \ref{lem:hypernorm} and \ref{lem:Pi>N}, we have

\begin{equation*}\begin{split}
&\max \zeta^{n+1}\\
& \leq (1-2\tau)\alpha_0 + \tau \left\| \mathcal{L}_{>N}(2\zeta^n+3(\zeta^n)^2+(\zeta^n)^3)\right\|_{\infty}\\
& \leq (1-2\tau)\alpha_0 + \tau\left[\left(1+ \sqrt{1+\eta}N^{\frac{d-1}{2}} + c_2\eta N^{s_0}\right)N^{-s+\frac{d-1}{2}}\|2\zeta^n+3(\zeta^n)^2+(\zeta^n)^3\|_{H^s}\right.\\
&\quad\left.+c_2\eta N^{s_0}\|2\zeta^n+3(\zeta^n)^2+(\zeta^n)^3\|_{\infty}\right]\\
&\leq (1-2\tau)\alpha_0 + \tau\left(N^{-(s-\frac{d-1}{2})}+ N^{d-1-s} +  N^{\frac{d-1}{2}+s_0 -s-\varepsilon} +  N^{s_0-\varepsilon}\right) \text{const}(\nu,u_0,s,d)\\
&\leq \alpha_0,
\end{split}\end{equation*}
if $N\geq N_1''(\alpha_0,\nu,s,d,\varepsilon)$ is large enough, which then leads to $\max u^{n+1}\leq 1+\alpha_0$. Thus, this theorem is proved by letting $N_1=\max\{N_1',N_1''\}$.
\end{proof}

\begin{remark}
The situation \eqref{equ:positivealpha} in our proof requires $\alpha_0>0$. This requirement stems from the fact that the hyperinterpolation operator does not preserve the sharp uniform bound. That is, $\|u_0\|_{\infty}\leq 1$ does not \emph{necessarily} imply $\|\mathcal{L}_Nu_0\|_{\infty}\leq 1$; see Remark \ref{rem:historical}. This fact also explains why spectral methods involving orthogonal projection and hyperinterpolation do not necessarily preserve the sharp maximum principle. In the following theorems, we may directly assume that $\|\mathcal{L}_Nu_0\|_{\infty}\leq 1+\alpha_0$ for $0\leq \alpha_0\leq1$. Thus, $\alpha_0=0$ is possible because the spectral error term brought by hyperinterpolation has been explicitly recorded in such an assumption.
\end{remark}

\begin{theorem}[Effective maximum principle for $0<\tau\leq 1/2$]\label{thm:effectivemaxprinciple1}
Let $0<\tau\leq 1/2$ and $s_0$ be a constant marginally larger than $(d-1)/2$. Assume $u_0\in H^{s}(\Omega)$ with $s>d-1$ and $\|u^0\|_{\infty}\leq 1+\alpha_0$ for some $0\leq\alpha_0\leq 1$. If $\eta= \tilde{c} N^{-\varepsilon}$ for any $\tilde{c}\geq0$ and $\varepsilon >s_0$ and $N\geq N_2 :=N_2(\nu,s,d,u_0,\varepsilon)$, then, for any $n\geq 1$,
\begin{equation}\label{equ:max1}
\|u^n\|_{\infty}\leq 1+\theta^n\alpha_0 + \frac{1-\theta^n}{1-\theta}\tau C_{\nu,u_0,s,d}\left(\sqrt{1+\eta}N^{d-1-s} + \eta N^{s_0+\frac{d-1}{2}-s} + \eta N^{s_0}\right),
\end{equation}
where $\theta = 1-2\tau$, and $C_{\nu,u_0,s,d}>0$ is a constant depending on $\nu$, $u_0$, $s$, and $d$. Consequently,
\begin{equation*}
\limsup_{n\rightarrow\infty} \|u^n\|_{\infty}\leq 1+\frac{1}{2}C_{\nu,u_0,s,d}\left(\sqrt{1+\eta}N^{d-1-s} + \eta N^{s_0+\frac{d-1}{2}-s} + \eta N^{s_0}\right)
\end{equation*}
and
\begin{equation*}
\limsup_{N\rightarrow\infty} \|u^n\|_{\infty}\leq 1+\theta^n\alpha_0.
\end{equation*}
\end{theorem}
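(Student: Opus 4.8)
The plan is to follow the inductive skeleton of the proof of Theorem~\ref{thm:uniformstability1}, but instead of forcing the spectral error to be absorbed into $\alpha_0$ at every step, I would keep it explicit and propagate it through a linear recursion. First I would rewrite \eqref{equ:scheme} as $(1-\tau\nu^2\Delta)u^{n+1}=u^n+\tau\mathcal{L}_N(u^n-(u^n)^3)$, set $u^n=1+\zeta^n$, and expand exactly as before to obtain
\begin{equation*}
(1-\tau\nu^2\Delta)\zeta^{n+1}=(1-2\tau)\zeta^n-\tau(\zeta^n)^2(3+\zeta^n)+\tau\mathcal{L}_{>N}\bigl(2\zeta^n+3(\zeta^n)^2+(\zeta^n)^3\bigr).
\end{equation*}
Since $(1-\tau\nu^2\Delta)^{-1}$ obeys the maximum principle on $\mathbb{S}^{d-1}$ (at a maximum point $\Delta\zeta^{n+1}\leq0$), and since $\tau\leq1/2$ forces $\theta=1-2\tau\geq0$ while the running uniform bound keeps $3+\zeta^n\geq1-a_n\geq0$ so that the cubic term is nonpositive, I would conclude $\max\zeta^{n+1}\leq\theta\max\zeta^n+\tau\bigl\|\mathcal{L}_{>N}(2\zeta^n+3(\zeta^n)^2+(\zeta^n)^3)\bigr\|_{\infty}$; repeating the argument for $-u^{n+1}$ turns this into a one-step bound on $\|u^{n+1}\|_{\infty}-1$.

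The second step is to bound the spectral residual uniformly in $n$. I would apply Lemma~\ref{lem:Pi>N} to $f=2\zeta^n+3(\zeta^n)^2+(\zeta^n)^3$ with $t=s_0$, combine it with the best-approximation estimate $E_N(f)\lesssim N^{-(s-(d-1)/2)}\|f\|_{H^s}$ and the operator-norm bound $\|\mathcal{L}_N\|_{\infty}=\mathcal{O}(\sqrt{1+\eta}N^{(d-1)/2})$ from Lemma~\ref{lem:hypernorm}. The crucial ingredient is the uniform-in-$n$ Sobolev bound $\sup_n\|u^n\|_{H^s}\leq c_{\nu,u_0,s,d}$, obtained from the discrete smoothing estimate exactly as in the proof of Theorem~\ref{thm:uniformstability1}, which controls $\|f\|_{H^s}$ and $\|f\|_{\infty}$ by a constant. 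Matching the three resulting exponents ($\sqrt{1+\eta}N^{(d-1)/2}\cdot N^{-(s-(d-1)/2)}=\sqrt{1+\eta}N^{d-1-s}$, then $\eta N^{s_0}\cdot N^{-(s-(d-1)/2)}=\eta N^{s_0+(d-1)/2-s}$, and finally $\eta N^{s_0}$) yields
\begin{equation*}
\bigl\|\mathcal{L}_{>N}(2\zeta^n+3(\zeta^n)^2+(\zeta^n)^3)\bigr\|_{\infty}\leq C_{\nu,u_0,s,d}\Bigl(\sqrt{1+\eta}N^{d-1-s}+\eta N^{s_0+\tfrac{d-1}{2}-s}+\eta N^{s_0}\Bigr)=:\Lambda_N.
\end{equation*}

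With $a_n:=\|u^n\|_{\infty}-1$ the two steps give the linear recursion $a_{n+1}\leq\theta a_n+\tau\Lambda_N$ with $a_0\leq\alpha_0$. Solving it via the geometric sum $\sum_{k=0}^{n-1}\theta^k=(1-\theta^n)/(1-\theta)$ produces precisely the estimate \eqref{equ:max1}. The two consequences then follow by taking limits: as $n\to\infty$ one has $\theta^n\to0$ and $(1-\theta^n)/(1-\theta)\to1/(2\tau)$, so the accumulated error tends to $\tfrac12\Lambda_N$; as $N\to\infty$ the hypotheses $s>d-1$ and $\eta=\tilde{c}N^{-\varepsilon}$ with $\varepsilon>s_0$ make each term of $\Lambda_N$ vanish, leaving $1+\theta^n\alpha_0$.

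The main obstacle I anticipate is not the recursion, which is routine, but closing the bootstrap that makes $\Lambda_N$ (and hence $C_{\nu,u_0,s,d}$) genuinely independent of the step index $n$. One must first fix $N\geq N_2$ large enough that $\Lambda_N$ is small, so that the solved recursion keeps $\sup_n a_n$ within a fixed neighborhood of $\alpha_0$; this running bound is exactly what simultaneously guarantees $3+\zeta^n\geq0$ (any slight overshoot of $\|u^n\|_\infty$ past $2$ contributes to the cubic term only at order $\Lambda_N$ and is reabsorbed into $C_{\nu,u_0,s,d}$) and supplies the $L^\infty$ control needed to invoke the discrete smoothing estimate delivering $\sup_n\|u^n\|_{H^s}\leq c_{\nu,u_0,s,d}$. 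Verifying that this loop closes without the constant secretly depending on $n$ is the delicate point; once it is settled, the remainder is bookkeeping.
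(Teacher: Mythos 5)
Your proposal is correct and follows essentially the same route as the paper: the paper likewise reuses the inductive step of Theorem~\ref{thm:uniformstability1} to secure the crude uniform bound $\sup_{n\geq0}\|u^n\|_{\infty}\leq 2$ for $N\geq N_2$ (which settles your bootstrap concern, since it guarantees $3+\zeta^n\geq 0$ and the uniform Sobolev bound with constants independent of $n$), then derives the one-step recursion $\alpha_{n+1}\leq(1-2\tau)\alpha_n+\tau C_{\nu,u_0,s,d}\bigl(\sqrt{1+\eta}N^{d-1-s}+\eta N^{s_0+\frac{d-1}{2}-s}+\eta N^{s_0}\bigr)$, repeats it for $\tilde{\alpha}_n:=\max(-1-u^n)$, and iterates in $n$ before taking the limits $n\rightarrow\infty$ and $N\rightarrow\infty$. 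Your explicit solving of the recursion via the geometric sum and the limit bookkeeping match the paper's argument exactly.
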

\begin{proof}
By the inductive step in the proof of Theorem \ref{thm:uniformstability1}, there exists $N_2:=N_2(\nu,s,d,u_0,\varepsilon)>0$ such that for any $N\geq N_2$, we have the weakest estimate $\sup_{n\geq 0}\|u^n\|_{\infty}\leq2$. Denote $u^n:=1+\zeta^n$ and define $\alpha_n:=\max\zeta^n$. Then, by repeating the procedure in the proof of Theorem \ref{thm:uniformstability1}, we have
\begin{equation*}
\alpha_{n+1} \leq   (1-2\tau)\alpha_n +\tau C_{\nu,u_0,s,d} \left(\sqrt{1+\eta}N^{d-1-s} + \eta N^{s_0+\frac{d-1}{2}-s} + \eta N^{s_0}\right),
\end{equation*}
where the constant $C_{\nu,u_0,s,d}>0$ depends on $\nu$, $u_0$, $s$, and $d$. A similar estimate also holds for $\tilde{\alpha}_n:=\max(-1-u^n)$. Thus, for $\theta = 1-2\tau$, iterating in $n$ then gives the effective maximum principle \eqref{equ:max1}. Letting $n\rightarrow\infty$ and $N\rightarrow\infty$ leads to both limit cases, respectively.
\end{proof}

\subsection{The case of $1/2<\tau<2$}
We now consider the case of $1/2< \tau<2$, with the aid of a prototype iterative system investigated in \cite[Lemma 3.3]{MR4294331}.
\begin{lemma}[Prototype iterative system for the maximum principle \cite{MR4294331}]\label{lem:dongli}
Let $0<\tau<2$ and $p(x)=(1+\tau)x-\tau x^3$. Consider the recurrent relation
\begin{equation*}
\alpha_{n+1}:=\max_{|x|\leq \alpha_n}|p(x)|+\zeta,\quad n \geq 0,
\end{equation*}
where $\zeta>0$.
\begin{enumerate}
\item Case $0<\tau\leq1/2$. Let $\alpha_0=2$. There exists an absolute constant $\zeta_0>0$ sufficiently small such that for all $0< \zeta\leq \zeta_0$, we have $1\leq \alpha_n\leq 2$ for all $n$.
\item Case $1/2<\tau\leq 2-\epsilon_0$ for some $0<\epsilon_0\leq 1$.
Let
\begin{equation*}
\alpha_0 = \frac12\left(\frac{(1+\tau)^{3/2}}{\sqrt{3\tau}}\cdot\frac23+\sqrt{\frac{2+\tau}{\tau}}\right).
\end{equation*}
Then, there exists a constant $\zeta_0>0$ depending only on $\epsilon_0$ such that if $0< \zeta\leq \zeta_0$, then, for all $n\geq 1$, we have
\begin{equation*}
\frac{(1+\tau)^{3/2}}{\sqrt{3\tau}}\cdot\frac23+\zeta\leq\alpha_n\leq \alpha_0.
\end{equation*}
\end{enumerate}
\end{lemma}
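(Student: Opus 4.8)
The plan is to analyze the scalar recurrence $\alpha_{n+1} = \max_{|x| \leq \alpha_n} |p(x)| + \zeta$ directly, treating $p(x) = (1+\tau)x - \tau x^3$ as a fixed cubic and studying how the map $\Phi(\alpha) := \max_{|x|\leq\alpha}|p(x)|$ behaves on the relevant interval. The first step is to understand $\Phi$ itself. Since $p$ is odd, $|p|$ is even, so I only need to track $p$ on $[0,\alpha]$. Differentiating, $p'(x) = (1+\tau) - 3\tau x^2$, which vanishes at the critical point $x_* = \sqrt{(1+\tau)/(3\tau)}$ where $p$ attains its local maximum $p(x_*) = \frac{2}{3}(1+\tau)x_* = \frac{(1+\tau)^{3/2}}{\sqrt{3\tau}}\cdot\frac{2}{3}$. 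This value is precisely the lower-bound constant appearing in the statement, which is the key structural clue: the quantity $M := \frac{(1+\tau)^{3/2}}{\sqrt{3\tau}}\cdot\frac{2}{3}$ is the global maximum of $|p|$ over all of $\mathbb{R}$ restricted to where $p$ stays bounded, and it governs both cases.

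For Case 1 ($0 < \tau \leq 1/2$), I would check that with $\alpha_0 = 2$ the map keeps $\alpha_n$ trapped in $[1,2]$. The idea is to verify $\Phi(2) + \zeta \leq 2$ and $\Phi(\alpha) + \zeta \geq 1$ for $\alpha \in [1,2]$, for $\zeta$ small. The upper bound uses that for $\tau \leq 1/2$ one computes $p(2) = 2(1+\tau) - 8\tau = 2 - 6\tau$ and checks $|p|$ on $[0,2]$ does not exceed $2-O(\zeta)$; a short monotonicity/endpoint comparison using the location of $x_*$ relative to $2$ handles this. The lower bound is even simpler since $|p(1)| = 1$, so $\Phi(\alpha) \geq 1$ whenever $\alpha \geq 1$. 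Then invariance of $[1,2]$ under $\alpha \mapsto \Phi(\alpha)+\zeta$ follows by induction, choosing $\zeta_0$ small enough to absorb the slack in the upper estimate.

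For Case 2 ($1/2 < \tau \leq 2 - \epsilon_0$), the mechanism is different: here $x_* \leq \alpha_0$, so the interior maximum $M$ is actually attained and $\Phi(\alpha) = M$ for all $\alpha \geq x_*$, making the recurrence eventually constant. I would first confirm that the prescribed $\alpha_0 = \frac{1}{2}(M + \sqrt{(2+\tau)/\tau})$ exceeds $x_*$ (so the interior max is active) and also exceeds the largest root behavior so that $\Phi(\alpha_0) = M$. The lower bound $\alpha_n \geq M + \zeta$ is then immediate once $\alpha_{n-1} \geq x_*$, since $\Phi$ hits the interior maximum exactly. For the upper bound $\alpha_n \leq \alpha_0$, I would show the interval $[x_*, \alpha_0]$ is forward-invariant: this requires $M + \zeta \leq \alpha_0$, equivalently $\zeta \leq \frac{1}{2}(\sqrt{(2+\tau)/\tau} - M)$, and the constant $\sqrt{(2+\tau)/\tau}$ is engineered to be the point where $|p|$ returns to a controlled value, guaranteeing positivity of this gap precisely when $\tau \leq 2 - \epsilon_0$. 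The dependence of $\zeta_0$ on $\epsilon_0$ enters exactly here, because as $\tau \to 2$ the gap $\sqrt{(2+\tau)/\tau} - M$ shrinks.

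The main obstacle will be verifying the algebraic inequality $M < \sqrt{(2+\tau)/\tau}$ with a quantitative gap bounded below in terms of $\epsilon_0$ on the range $1/2 < \tau \leq 2-\epsilon_0$, since both sides are explicit but awkward functions of $\tau$; I expect this reduces to a single-variable polynomial inequality after squaring and clearing denominators, which can be checked by showing the difference is positive and bounded away from zero on the compact interval. The secondary subtlety is confirming that the specific choice of $\alpha_0$ genuinely lands in the regime where $\Phi$ equals the interior maximum rather than an endpoint value, which needs a brief check that $p$ evaluated at $\alpha_0$ (on the decreasing branch past $x_*$) stays below $M$ in magnitude.
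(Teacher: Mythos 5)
First, a framing note: the paper itself contains no proof of this lemma --- it is imported verbatim from Li \cite{MR4294331} --- so your attempt can only be judged on its own merits, not against an in-paper argument. Your structural reading is correct, and your Case 2 plan is essentially sound and completable: $M=\frac{2}{3}(1+\tau)^{3/2}/\sqrt{3\tau}$ is the interior maximum of $p$ at $x_*=\sqrt{(1+\tau)/(3\tau)}$; $x_1=\sqrt{(2+\tau)/\tau}$ solves $|p(x)|=x$ on the outer branch; and $M<x_1$ for $\tau<2$ reduces, after squaring and clearing denominators, to positivity of $q(\tau)=27(2+\tau)-4(1+\tau)^3=50+15\tau-12\tau^2-4\tau^3$, which is decreasing on $[1/2,2]$ and vanishes only at $\tau=2$, so the gap is of order $\epsilon_0$ exactly as you predicted. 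Forward invariance of $[M+\zeta,\alpha_0]$ then follows because $-p$ is increasing on $[x_*,\infty)$, once one verifies your flagged ``secondary subtlety'' $-p(\alpha_0)\le M$ (which does hold, and is automatic whenever $\alpha_0\le\sqrt{(1+\tau)/\tau}$, since $M$ is the global maximum of $p$ on $[0,\infty)$). One slip: your blanket claim that $\Phi(\alpha)=M$ for all $\alpha\ge x_*$ is false --- $\Phi(\alpha)=\max\{M,-p(\alpha)\}\to\infty$ --- but your later endpoint check repairs it, and the lower bound $\alpha_n\ge M+\zeta$ for $n\ge1$ follows as you say since $M>x_*$ when $\tau>1/2$.

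The genuine gap is in Case 1, and it is not cosmetic: the ``slack'' you propose to absorb with a small absolute $\zeta_0$ vanishes as $\tau\to0^+$. For $\tau\le 1/11$ one has $x_*\ge 2$, so $p$ is increasing and nonnegative on $[0,2]$ and $\Phi(2)=p(2)=2-6\tau$; the invariance requirement $\Phi(2)+\zeta\le 2$ therefore forces $\zeta\le 6\tau$, which is incompatible with a $\tau$-independent $\zeta_0$. Concretely, fix any $\zeta>0$ and take $\tau<\zeta/6$: then $\alpha_1=2-6\tau+\zeta>2$ already, and since the map $\alpha\mapsto p(\alpha)+\zeta$ is increasing on $[0,x_*]$ with $p(\alpha)+\zeta>\alpha$ below the fixed point determined by $\tau\alpha(\alpha^2-1)=\zeta$, the iterates climb monotonically toward roughly $(\zeta/\tau)^{1/3}\gg 2$, so the conclusion $\alpha_n\le 2$ fails outright. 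Your own computation $p(2)=2-6\tau$ exposes this, but you treated $6\tau$ as uniform slack. To close Case 1 you must either let $\zeta_0$ scale with $\tau$ (e.g.\ $\zeta\le 6\tau$ suffices for $\tau\le 1/11$, while an absolute constant such as $\zeta_0=1/2$ does work on $[1/11,1/2]$, where $\Phi(2)=\max\{M,|2-6\tau|\}\le 16/11$), or restrict to $\tau$ bounded away from zero; this $\tau$-scaling is in fact consistent with how the perturbation arises in the PDE application, where the spectral error enters multiplied by $\tau$. Note also that this means the statement as quoted here, with an absolute $\zeta_0$ uniform over all $0<\tau\le 1/2$, is itself fragile in the limit $\tau\to 0^+$, so the failure of your argument at this point reflects a real issue rather than a mere missing estimate.
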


\begin{remark}
For $\tau\geq 2$, such a stability result does not hold; see counterexamples provided in Remark 3.7 and Corollary 3.1 in \cite{MR4294331}.
\end{remark}

\begin{theorem}[$L^{\infty}$ stability for $1/2<\tau<2$]\label{thm:uniformstability2}
Let $1/2<\tau\leq 2-\epsilon_0$ for some $0<\epsilon_0\leq 1$,
\begin{equation*}
M_0 = \frac12\left(\frac{(1+\tau)^{3/2}}{\sqrt{3\tau}}\cdot\frac23+\sqrt{\frac{2+\tau}{\tau}}\right),
\end{equation*}
and $s_0$ be a constant marginally larger than $(d-1)/2$. Assume $u_0\in H^{s}(\mathbb{S}^{d-1})$ with $s>d-1$ and $\|u^0\|_{\infty}\leq M_0$. If $\eta= \tilde{c} N^{-\varepsilon}$ for any $\tilde{c}\geq0$ and $\varepsilon >s_0$ and $N\geq N_3:=N_3\left(\epsilon_0,\nu,s,d,u_0,\varepsilon\right)$, then we have
\begin{equation*}
\sup_{n\geq 0}\|u^n\|_{\infty}\leq M_0.
\end{equation*}
\end{theorem}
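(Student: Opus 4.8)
The plan is to run the same induction as in Theorem \ref{thm:uniformstability1}, but with the explicit contraction factor $1-2\tau$ — which no longer satisfies $|1-2\tau|<1$ once $\tau>1/2$ — replaced by the prototype recurrence of Lemma \ref{lem:dongli}, Case~2. First I rewrite the scheme \eqref{equ:scheme}. Since $\mathcal{L}_N=I-\mathcal{L}_{>N}$ and, pointwise, $u^n+\tau\bigl(u^n-(u^n)^3\bigr)=(1+\tau)u^n-\tau(u^n)^3=p(u^n)$ with $p(x)=(1+\tau)x-\tau x^3$ the very polynomial of Lemma \ref{lem:dongli}, the scheme reads
\begin{equation*}
(1-\tau\nu^2\Delta)u^{n+1}=p(u^n)-\tau\,\mathcal{L}_{>N}\bigl(u^n-(u^n)^3\bigr).
\end{equation*}
Evaluating at a point where $u^{n+1}$ attains its maximum, the Laplace--Beltrami term is nonpositive, so the elliptic maximum principle for $1-\tau\nu^2\Delta$ gives $\max u^{n+1}\leq\max\bigl[p(u^n)-\tau\mathcal{L}_{>N}(\cdots)\bigr]$, with no constraint coupling $\tau$ to $\nu$. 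Writing $\alpha_n:=\|u^n\|_\infty$ and using that $p$ is odd together with $\max_x|p(u^n(x))|\leq\max_{|y|\leq\alpha_n}|p(y)|$, the same argument applied to $-u^{n+1}$ yields the one-step estimate
\begin{equation*}
\alpha_{n+1}\leq\max_{|y|\leq\alpha_n}|p(y)|+\tau\bigl\|\mathcal{L}_{>N}\bigl(u^n-(u^n)^3\bigr)\bigr\|_\infty .
\end{equation*}

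Next I show that the defect $\tau\|\mathcal{L}_{>N}(u^n-(u^n)^3)\|_\infty$ can be forced below the threshold $\zeta_0$ of Lemma \ref{lem:dongli}. Under the induction hypothesis $\alpha_j\leq M_0$ for $j\leq n$, the discrete smoothing estimate used in Theorem \ref{thm:uniformstability1} provides a uniform bound $\sup_{j\leq n}\|u^j\|_{H^s}\leq c_{\nu,u_0,s,d}$, and hence $\|u^n-(u^n)^3\|_{H^s}$ is bounded by a fixed constant through the algebra property of $H^s$ for $s>(d-1)/2$. Inserting this bound, the best-approximation rate $E_N(f)\lesssim N^{-(s-(d-1)/2)}\|f\|_{H^s}$, and Lemma \ref{lem:hypernorm} into the estimate \eqref{equ:hyperinftyerror2} of Lemma \ref{lem:Pi>N} bounds the defect by a fixed multiple of
\begin{equation*}
\sqrt{1+\eta}\,N^{d-1-s}+\eta\,N^{s_0+\frac{d-1}{2}-s}+\eta\,N^{s_0}.
\end{equation*}
Because $\eta=\tilde c N^{-\varepsilon}$ with $\varepsilon>s_0$ and $s>d-1$, each exponent is negative, so all three terms vanish as $N\to\infty$; therefore there is $N_3=N_3(\epsilon_0,\nu,s,d,u_0,\varepsilon)$ such that the defect does not exceed $\zeta_0$ for all $N\geq N_3$.

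Finally I close the induction by comparison. The map $g(\alpha):=\max_{|y|\leq\alpha}|p(y)|$ is nondecreasing on $[0,\infty)$, so the one-step estimate with defect $\leq\zeta_0$ shows that $\alpha_n$ is dominated, term by term, by the sequence generated by the equality recurrence $\alpha_{n+1}=g(\alpha_n)+\zeta_0$ started from $\alpha_0=M_0$. Lemma \ref{lem:dongli}, Case~2, whose starting value is precisely this $M_0$, then guarantees that the dominating sequence never exceeds $M_0$; hence $\alpha_n\leq M_0$, which advances the induction and proves $\sup_{n\geq 0}\|u^n\|_\infty\leq M_0$. I expect the main obstacle to be the circular dependence between the $L^\infty$ bound being proved and the $H^s$ bound needed to control the defect: the smoothing estimate presupposes a uniform-in-$n$ $L^\infty$ bound on the earlier iterates, so the two bounds must be carried together in a single induction, exactly as in Theorem \ref{thm:uniformstability1}. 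The delicate point is that the threshold $\zeta_0$ must be met uniformly in $n$ rather than step by step, and this is exactly what the $n$-independent $H^s$ estimate secures.
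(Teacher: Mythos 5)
Your proposal is correct and follows essentially the same route as the paper: rewrite the scheme so that the pointwise nonlinearity appears as $p(u^n)=(1+\tau)u^n-\tau(u^n)^3$, apply the elliptic maximum principle for $1-\tau\nu^2\Delta$, control the hyperinterpolation defect via Lemma \ref{lem:Pi>N} together with the uniform-in-$n$ $H^s$ smoothing bound so that it falls below the threshold $\zeta_0$ for $N\geq N_3$, and close the induction with Lemma \ref{lem:dongli}, Case 2. Your single-defect decomposition $p(u^n)-\tau\mathcal{L}_{>N}\bigl(u^n-(u^n)^3\bigr)$ is just an equivalent (marginally tighter) rearrangement of the paper's two terms $\mathcal{L}_{>N}u^n$ and $\mathcal{L}_{>N}\bigl((1+\tau)u^n-\tau(u^n)^3\bigr)$, not a different argument.
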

\begin{remark}\label{rem:dongli}
As suggested in \cite{MR4294331}, the bound $M_0$ can be replaced with any number
\begin{equation}\label{equ:m0}
\tilde{M}_0\in\left(\frac{(1+\tau)^{3/2}}{\sqrt{3\tau}},\sqrt{\frac{2+\tau}{\tau}}\right).
\end{equation}
Correspondingly, $N_3$ in Theorem \ref{thm:uniformstability2} should also depend on $\tilde{M}_0$, or more precisely, on its distance to the end points of the interval in \eqref{equ:m0}.
\end{remark}

\begin{proof}
We adopt the same induction setting in the proof of Theorem \ref{thm:uniformstability1}. Note that
\begin{equation*}\begin{split}
&(1-\tau\nu^2\Delta)u^{n+1}\\
 =& u^n - \mathcal{L}_Nu^n + \mathcal{L}_N\left((1+\tau)u^n-\tau(u^n)^3\right) \\
=& (u^n - \mathcal{L}_Nu^n) +\left((1+\tau)u^n-\tau(u^n)^3\right) - \mathcal{L}_{>N}\left((1+\tau)u^n-\tau(u^n)^3\right).
\end{split}\end{equation*}
Recall $p(x)=(1+\tau)x-\tau x^3$. Then, by the maximum principle, we have
\begin{equation*}\begin{split}
\|u^{n+1}\|_{\infty}\leq&\|u^n - \mathcal{L}_Nu^n\|_{\infty}+\|p(u^n)\|_{\infty}+\|\mathcal{L}_{>N}\left((1+\tau)u^n-\tau(u^n)^3\right)\|_{\infty}\\
\leq&  \|u^n - \mathcal{L}_Nu^n\|_{\infty} + M_0-\zeta+ \|\mathcal{L}_{>N}\left((1+\tau)u^n-\tau(u^n)^3\right)\|_{\infty},
\end{split}\end{equation*}
where the estimates for $\|u^n - \mathcal{L}_Nu^n\|_{\infty}$ and $\|\mathcal{L}_{>N}\left((1+\tau)u^n-\tau(u^n)^3\right)\|_{\infty}$ are similar to that in the proof of Theorem \ref{thm:effectivemaxprinciple1}. Thus, the theorem follows from Lemma \ref{lem:dongli} and induction.
\end{proof}

\section{Refined results with quadrature exactness}\label{sec:discussion}

In this section, we demonstrate that our scheme \eqref{equ:scheme} is equivalent to the discrete Galerkin scheme if the quadrature exactness \eqref{equ:exactness} is assumed. With such an assumption, we can also investigate the energy stability of our scheme \eqref{equ:scheme}, which is not mentioned in Section \ref{sec:theory}.

\subsection{Discrete Galerkin method}

It should be noted that, although this paper only focuses on the Allen--Cahn equation \eqref{equ:AC}, such equivalence also holds for other semi-linear partial differential equations \eqref{equ:PDE}, namely,
\begin{equation*}
u_t=\mathbf{L}u+\mathbf{N}(u).
\end{equation*}
In spirit of our scheme \eqref{equ:scheme}, we consider the following semi-discrete scheme for the semi-linear PDE \eqref{equ:PDE}:
\begin{equation}\label{equ:semischeme}
\dfrac{u^{n+1}-u^n}{\tau}=\mathbf{L}u^{n+1}+\mathcal{L}_N(\mathbf{N}(u^n)),
\end{equation}
where $\tau>0$ is the time stepping size, and $u^n\in\mathbb{P}_N$ denotes the numerical solution at $t=n\tau$. If the quadrature exactness \eqref{equ:exactness} is assumed, then the hyperinterpolation operator $\mathcal{L}_N$ is a discrete projection operator in the sense of
\begin{equation}\label{equ:discreteprojection}
\langle f-\mathcal{L}_Nf ,\chi\rangle_m =0\quad\text{and}\quad\mathcal{L}_N\chi = \chi \quad\forall\chi\in\mathbb{P}_N,
\end{equation}
as originally shown in \cite{sloan1995polynomial}. The scheme \eqref{equ:semischeme} is equivalent to
\begin{equation*}
\mathcal{L}_N\left(\dfrac{u^{n+1}-u^n}{\tau}-\mathbf{L}u^{n+1}-\mathbf{N}(u^n)\right) = \dfrac{u^{n+1}-u^n}{\tau}-\mathbf{L}u^{n+1}-\mathcal{L}_N(\mathbf{N}(u^n))=0,
\end{equation*}
which can be obtained using the linearity of $\mathcal{L}_N$ and the property \eqref{equ:discreteprojection}. Then, with the property \eqref{equ:discreteprojection} again, we know
\begin{equation*}
\left\langle \dfrac{u^{n+1}-u^n}{\tau}-\mathbf{L}u^{n+1}-\mathbf{N}(u^n),\chi \right\rangle_m = 0\quad \forall \chi\in\mathbb{P}_N,
\end{equation*}
which is further equivalent to
\begin{equation}\label{equ:qualocation}
\frac{1}{\tau}\left\langle u^{n+1}-u^n,\chi \right\rangle_m = \left\langle \mathbf{L}u^{n+1},\chi \right\rangle_m + \left\langle \mathbf{N}(u^n), \chi  \right\rangle_m \quad \forall \chi \in \mathbb{P}_N.
\end{equation}
Note that (\ref{equ:qualocation}) is the \emph{discrete} Galerkin method for the scheme \eqref{equ:semischeme} on the quadrature points $\mathcal{X}_m$.

Focusing on the Allen--Cahn equation \eqref{equ:AC}, the above discussion suggests that, if the quadrature exactness \eqref{equ:exactness} is assumed, then the proposed scheme \eqref{equ:scheme} is equivalent to
\begin{equation}\label{equ:ACqualocation}
\frac{1}{\tau}\left\langle u^{n+1}-u^n,\chi \right\rangle_m = \left\langle\nu^2\Delta u^{n+1},\chi \right\rangle_m - \left\langle (u^n)^3-u^n, \chi  \right\rangle_m \quad \forall \chi \in \mathbb{P}_N,
\end{equation}
with $u^0 = \mathcal{L}_Nu_0\in\mathbb{P}_N$. The schemes \eqref{equ:qualocation} and \eqref{equ:ACqualocation} describe a quadrature-based Galerkin method, and it may be also known as the \emph{qualocation method}, or more precisely, \emph{quadrature-modified collocation method}, as firstly investigated by Sloan and Wendland in \cite{MR960849,MR1027125}. The motivation of the qualocation method is to design numerical schemes achieving the theoretical benefits of the Galerkin method at a computational cost comparable to the collocation method.

\subsection{Refined results}

An immediate consequence of the quadrature exactness \eqref{equ:exactness} is $\eta = 0$. Thus, we have the following corollary of the theorems in Section \ref{sec:theory}. Note that if $\eta = 0$, then $N_1$, $N_2$, and $N_3$ do not necessarily depend on $\varepsilon$.
\begin{corollary}\label{cor:quadrature}
Consider the scheme \eqref{equ:scheme} for the Allen--Cahn equation \eqref{equ:AC} on $\mathbb{S}^{d-1}$, where the quadrature rule \eqref{equ:quad} has exactness degree at least $2N$. Assume $u_0\in H^{s}(\mathbb{S}^{d-1})$ with $s>d-1$. Then, the following assertions holds:
\begin{enumerate}
    \item  \emph{$L^{\infty}$ stability for $0<\tau\leq 1/2$.} Let $0<\alpha_0\leq 1$ and $0< \tau\leq 1/2$. Assume $\|u_0\|_{\infty}\leq 1$. If $N\geq N_4:=N_4\left(\alpha_0,\nu,s,d,u_0\right)$, then
\begin{equation*}
\sup_{n\geq 0} \|u^n\|_{\infty}\leq 1+\alpha_0.
\end{equation*}
    \item \emph{Effective maximum principle for $0<\tau \leq 1/2$.} Let $0<\tau\leq 1/2$. Assume $\|u^0\|_{\infty}\leq 1+\alpha_0$ for some $0<\alpha_0\leq 1$. If $N\geq N_4':=N_4'(\nu,s,d,u_0)$, then, for any $n\geq 1$,
\begin{equation*}
\|u^n\|_{\infty}\leq 1+\theta^n\alpha_0 + \frac{1-\theta^n}{1-\theta}\tau C_{\nu,u_0,s,d}N^{d-1-s},
\end{equation*}
where $\theta = 1-2\tau$, and $C_{\nu,u_0,s,d}>0$ is a constant depending on $\nu$, $u_0$, $s$, and $d$.
    \item \emph{$L^{\infty}$-stability for $1/2< \tau< 2$.} Let $1/2<\tau< 2-\epsilon_0$ for some $0<\epsilon_0\leq 1$, and let
\begin{equation*}
M_0 = \frac12\left(\frac{(1+\tau)^{3/2}}{\sqrt{3\tau}}\cdot\frac23+\sqrt{\frac{2+\tau}{\tau}}\right).
\end{equation*}
 Assume $\|u^0\|_{\infty}\leq M_0$.
 If $N\geq N_4'':=N_4''(\epsilon_0,\nu,s,d,u_0)$, then
\begin{equation*}
\sup_{n\geq 0}\|u^n\|_{\infty}\leq M_0.
\end{equation*}
\end{enumerate}
\end{corollary}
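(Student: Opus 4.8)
The plan is to observe that Corollary \ref{cor:quadrature} is not a genuinely new result but a direct specialization of Theorems \ref{thm:uniformstability1}, \ref{thm:effectivemaxprinciple1}, and \ref{thm:uniformstability2} to the case where the quadrature rule attains exactness degree at least $2N$. The single key observation that unlocks everything is that quadrature exactness \eqref{equ:exactness} forces the MZ constant to be $\eta=0$: indeed, for any $\chi\in\mathbb{P}_N$ we have $\chi^2\in\mathbb{P}_{2N}$, so the left-hand side of \eqref{equ:MZproperty} vanishes identically, and hence the smallest admissible $\eta$ in Assumption \ref{assumption:1MZ}(II) is zero. Assumption (I) is implied by exactness degree at least one, and Assumption (III) follows from exactness on $\mathbb{P}_{2N}$ together with density of polynomials in $C(\mathbb{S}^{d-1})$, so the full Assumption \ref{assumption:1MZ} holds and the three theorems of Section \ref{sec:theory} are applicable.

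First I would substitute $\eta=0$ into each of the three conclusions and check that every $\eta$-dependent term collapses. In Theorem \ref{thm:uniformstability1} the hypothesis $\eta=\tilde c N^{-\varepsilon}$ is satisfied trivially by taking $\tilde c=0$, so the dependence of the threshold $N_1$ on $\varepsilon$ disappears; this yields assertion (1) with $N_4=N_4(\alpha_0,\nu,s,d,u_0)$. For assertion (2) I would insert $\eta=0$ into the effective maximum principle bound \eqref{equ:max1}: the two terms $\eta N^{s_0+\frac{d-1}{2}-s}$ and $\eta N^{s_0}$ vanish, and the surviving term $\sqrt{1+\eta}N^{d-1-s}$ reduces to $N^{d-1-s}$, leaving precisely
\begin{equation*}
\|u^n\|_{\infty}\leq 1+\theta^n\alpha_0+\frac{1-\theta^n}{1-\theta}\tau C_{\nu,u_0,s,d}N^{d-1-s}.
\end{equation*}
Assertion (3) follows the same way from Theorem \ref{thm:uniformstability2}, with the threshold now written $N_4''(\epsilon_0,\nu,s,d,u_0)$ and no $\varepsilon$ dependence.

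The only point requiring a small amount of care — and what I expect to be the main (though still minor) obstacle — is verifying that the exactness hypothesis is genuinely strong enough to drive $\eta$ all the way to $0$ rather than merely to some small positive value, and correspondingly that the threshold indices $N_1,N_2,N_3$ remain finite and well defined when the $\varepsilon$-scaling of $\eta$ is replaced by identical vanishing. Since the proofs of the three theorems never divide by $\eta$ and only ever use $\eta$ in the form of terms that are multiplied by it, setting $\eta=0$ causes no degeneracy; every inequality in those proofs continues to hold, and the thresholds are determined solely by the best-approximation decay $E_N(u_0)\lesssim N^{-(s-(d-1)/2)}\|u_0\|_{H^s}$ and the operator-norm growth $\|\mathcal{L}_N\|_\infty=\mathcal{O}(N^{(d-1)/2})$ from Lemma \ref{lem:hypernorm}. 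I would therefore simply invoke the three theorems with $\tilde c=0$ and rename the resulting thresholds $N_4$, $N_4'$, $N_4''$, noting explicitly that they no longer depend on $\varepsilon$, which completes the proof.
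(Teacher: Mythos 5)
Your proposal is correct and matches the paper's own argument: the paper likewise observes that exactness degree $2N$ forces $\eta=0$ in the MZ condition \eqref{equ:MZproperty} (since $\chi^2\in\mathbb{P}_{2N}$ for $\chi\in\mathbb{P}_N$) and then simply specializes Theorems \ref{thm:uniformstability1}, \ref{thm:effectivemaxprinciple1}, and \ref{thm:uniformstability2}, noting that the thresholds no longer depend on $\varepsilon$. Your additional checks (Assumptions (I) and (III), and that setting $\eta=0$ causes no degeneracy in the proofs) are sound elaborations of the same route rather than a different one.
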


\begin{remark}
Recall the historical note in Remark \ref{rem:historical}. If we consider the Allen--Cahn equation \eqref{equ:AC} on $\mathbb{S}^2$, then the order of $\|\mathcal{L}_N\|_{\infty}$ can be reduced by $n^{1/2}$, and the results in Corollary \ref{cor:quadrature} can be improved correspondingly.
\end{remark}

We then consider the energy stability of the numerical solutions in presence of quadrature exactness. Recall that the energy functional $\mathcal{E}(u)$ of $u$ is defined as \eqref{equ:energy}, and its discrete version can be defined as
\begin{equation}\label{equ:discreteenergy}
\tilde{\mathcal{E}}(u) :=\sum_{j=1}^mw_j \left(\frac{\nu^2}{2}(\nabla u(x_j)\cdot\nabla u(x_j)) + F(u(x_j))\right),
\end{equation}
which is discretized by the quadrature rule \eqref{equ:quad}. Besides, recall that all weights $w_j$ are positive.

\begin{lemma}[Energy estimate]\label{lem:energyesti}
For any $n\geq 0$, if the quadrature rule \eqref{equ:quad} has exactness degree $2N$, then, the sequence $\{u^n\}_{n\geq0}$ generated by the scheme \eqref{equ:scheme} satisfies
\begin{equation}\label{equ:discreteenergyesti}\begin{split}
&\tilde{\mathcal{E}}(u^{n+1})-\tilde{\mathcal{E}}(u^n)+\left(\frac{1}{\tau}+\frac12\right)\sum_{j=1}^mw_j(u^{n+1}(x_j)-u^n(x_j))^2 \\
&\quad\quad\quad\quad \leq \frac32 \max\left\{\|u^n\|_{\infty}^2,\|u^{n+1}\|_{\infty}^2\right\}\sum_{j=1}^mw_j(u^{n+1}(x_j)-u^n(x_j))^2,
\end{split}\end{equation}
where the discrete energy $\tilde{\mathcal{E}}(u)$ of $u$ is given by \eqref{equ:discreteenergy}. Furthermore, if the quadrature rule \eqref{equ:quad} has exactness degree $4N$, then the sequence $\{u^n\}_{n\geq0}$ generated by the scheme \eqref{equ:scheme} satisfies
\begin{equation}\label{equ:energyesti}
\begin{split}
&\mathcal{E}(u^{n+1})-\mathcal{E}(u^n)+\left(\frac{1}{\tau}+\frac12\right)\int_{\mathbb{S}^{d-1}}(u^{n+1}-u^n)^2{\rm{d}}\omega_d \\
 & \quad\quad\quad\quad \leq \frac32 \max\left\{\|u^n\|_{\infty}^2,\|u^{n+1}\|_{\infty}^2\right\}\int_{\mathbb{S}^{d-1}}(u^{n+1}-u^n)^2{\rm{d}}\omega_d,
\end{split}\end{equation}
where the energy $\mathcal{E}(u)$ of $u$ is given by \eqref{equ:energy}.
\end{lemma}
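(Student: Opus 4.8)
The plan is to prove the discrete-energy estimate \eqref{equ:discreteenergyesti} first, then bootstrap to the continuous estimate \eqref{equ:energyesti} using the higher exactness degree. For the first part, I would start from the scheme \eqref{equ:scheme} rewritten as $\frac{u^{n+1}-u^n}{\tau}=\nu^2\Delta u^{n+1}-\mathcal{L}_N(f(u^n))$ and take the discrete inner product $\langle\cdot,\cdot\rangle_m$ of both sides with the natural test function $u^{n+1}-u^n$. Because $u^n,u^{n+1}\in\mathbb{P}_N$, the difference $u^{n+1}-u^n$ lies in $\mathbb{P}_N$, and exactness degree $2N$ means $\langle\cdot,\cdot\rangle_m$ agrees with $\langle\cdot,\cdot\rangle$ on products of two such polynomials; moreover the discrete-projection property \eqref{equ:discreteprojection} lets me replace $\langle\mathcal{L}_N(f(u^n)),u^{n+1}-u^n\rangle_m$ by $\langle f(u^n),u^{n+1}-u^n\rangle_m$.

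The core of the argument is then the standard three-term manipulation. First I would handle the time-difference term: taking $\langle\frac{u^{n+1}-u^n}{\tau},u^{n+1}-u^n\rangle_m=\frac1\tau\sum_jw_j(u^{n+1}(x_j)-u^n(x_j))^2$, which produces the $\frac1\tau$ factor on the left-hand side. Next the diffusion term $\langle\nu^2\Delta u^{n+1},u^{n+1}-u^n\rangle_m$; since exactness degree $2N$ makes the discrete inner product exact here, I integrate by parts on $\mathbb{S}^{d-1}$ to write this as $-\nu^2\langle\nabla u^{n+1},\nabla(u^{n+1}-u^n)\rangle$ and apply the polarization identity $2\nabla a\cdot\nabla(a-b)=|\nabla a|^2-|\nabla b|^2+|\nabla(a-b)|^2$ to extract the gradient part of $\tilde{\mathcal{E}}(u^{n+1})-\tilde{\mathcal{E}}(u^n)$. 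Finally the nonlinear term $-\langle f(u^n),u^{n+1}-u^n\rangle_m=-\sum_jw_j(u^n(x_j)^3-u^n(x_j))(u^{n+1}(x_j)-u^n(x_j))$, where I would use the pointwise algebraic identity relating $F(b)-F(a)$ to $f(a)(b-a)$ plus a remainder controlled by $\max\{\|u^n\|_\infty^2,\|u^{n+1}\|_\infty^2\}(b-a)^2$ — this is where the factor $\frac32\max\{\cdots\}$ on the right and the extra $+\frac12$ on the left originate, via a Taylor expansion of the quartic $F$ with its second derivative $F''(u)=3u^2-1$ bounded in magnitude by $3\max\{\cdots\}$.

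The transition to \eqref{equ:energyesti} is the step I expect to be the genuine obstacle, and the reason exactness degree $4N$ (rather than $2N$) is required. The continuous discrete energy $\mathcal{E}(u)$ in \eqref{equ:energy} involves $F(u)=\frac14(u^2-1)^2$, a degree-four polynomial in $u$; so for $u\in\mathbb{P}_N$ the integrand $F(u)$ is a spherical polynomial of degree $4N$, and only exactness degree $4N$ guarantees $\int_{\mathbb{S}^{d-1}}F(u)\,\mathrm{d}\omega_d=\sum_jw_jF(u(x_j))$. Likewise $\int_{\mathbb{S}^{d-1}}(u^{n+1}-u^n)^2\,\mathrm{d}\omega_d$ and $\int|\nabla u^{n+1}|^2$ are degree-$2N$ integrands and are already exact, but the potential term forces the higher degree. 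Once I confirm $\tilde{\mathcal{E}}(u^k)=\mathcal{E}(u^k)$ for $k=n,n+1$ and $\sum_jw_j(u^{n+1}(x_j)-u^n(x_j))^2=\int_{\mathbb{S}^{d-1}}(u^{n+1}-u^n)^2\,\mathrm{d}\omega_d$ under degree-$4N$ exactness, the estimate \eqref{equ:energyesti} follows immediately by substituting these equalities into \eqref{equ:discreteenergyesti}. The care needed is purely in tracking which polynomial degrees each term attains and verifying that $4N$ suffices for every one of them, which is routine once the degree bookkeeping is laid out.
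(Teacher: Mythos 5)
Your proof is correct, and the core computation coincides with the paper's: test the scheme \eqref{equ:scheme} against $u^{n+1}-u^n$, treat the diffusion term by Green--Beltrami plus the polarization identity (discarding the nonnegative $|\nabla(u^{n+1}-u^n)|^2$ term), Taylor-expand $F$ with $\frac12 f'(\xi)=\frac12(3\xi^2-1)\leq\frac32\max\{\|u^n\|_{\infty}^2,\|u^{n+1}\|_{\infty}^2\}-\frac12$, and kill $\left\langle f(u^n)-\mathcal{L}_N(f(u^n)),u^{n+1}-u^n\right\rangle_m$ via the discrete-projection property \eqref{equ:discreteprojection}; that you run the bookkeeping in the discrete inner product from the start while the paper writes the continuous identities first and then invokes exactness to discretize them is a cosmetic difference. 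Where you genuinely diverge is the degree-$4N$ case. You obtain \eqref{equ:energyesti} by pure substitution into \eqref{equ:discreteenergyesti}: under exactness $4N$ every discrete quantity equals its continuous counterpart, the binding term being $F(u^n)$, $F(u^{n+1})$ of degree exactly $4N$ (the gradient and difference terms only need degree $2N$), so $\tilde{\mathcal{E}}(u^k)=\mathcal{E}(u^k)$ and the weighted sums become integrals. The paper instead re-derives the estimate with continuous integrals throughout and is then left with the residual cross term $\left\langle f(u^n)-\mathcal{L}_N(f(u^n)),u^{n+1}-u^n\right\rangle$, which it eliminates by noting that $f(u^n)(u^{n+1}-u^n)$ has degree $4N$, so the continuous pairing equals the discrete one and vanishes by \eqref{equ:discreteprojection}. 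Both routes need exactness $4N$ for the same arithmetic reason ($3N+N=4N$ versus $\deg F(u)=4N$), and yours is the more economical; what the paper's longer route buys is that it isolates the cross term explicitly, which is exactly the quantity its subsequent remark uses to explain why energy decay cannot be guaranteed once quadrature exactness is dropped --- information your shortcut leaves implicit. One small imprecision worth fixing: you say $F''$ is ``bounded in magnitude by $3\max\{\cdots\}$,'' but what the estimate actually uses is the one-sided bound $F''(\xi)\leq 3\max\{\cdots\}-1$, whose $-1$ is precisely the source of the $+\frac12$ on the left-hand side, as your own accounting of the constants correctly reflects.
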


\begin{proof}
Note that
\begin{equation}\label{equ:energylem1}\begin{split}
\frac{1}{\tau}\int_{\mathbb{S}^{d-1}}(u^{n+1}-u^n)^2{\rm{d}}\omega_d
& = \left\langle\frac{u^{n+1}-u^n}{\tau},u^{n+1}-u^n\right\rangle\\
&= \left\langle\nu^2\Delta u^{n+1}-\mathcal{L}_{N}\left((u^n)^3-u^n\right), u^{n+1}-u^n\right\rangle\\
&=\nu^2\left\langle \Delta u^{n+1},u^{n+1}-u^n\right\rangle-\left\langle \mathcal{L}_{N}\left(f(u^n)\right),u^{n+1}-u^n\right\rangle.
\end{split}\end{equation}
For the first term on the right-hand side of \eqref{equ:energylem1}, the Green--Beltrami identity suggests
\begin{equation}\label{equ:energylem2}\begin{split}
&\nu^2\left\langle \Delta u^{n+1},u^{n+1}-u^n\right\rangle= -\nu^2\int_{\mathbb{S}^{d-1}}\nabla u^{n+1}\cdot\nabla(u^{n+1}-u^n)\text{d}\omega_d\\
=&-\frac{\nu^2}{2}\left(\int_{\mathbb{S}^{d-1}} \lvert\nabla u^{n+1}\rvert^2\text{d}\omega_d - \int_{\mathbb{S}^{d-1}} \lvert \nabla u^{n}\rvert^2\text{d}\omega_d +
\int_{\mathbb{S}^{d-1}} \lvert \nabla (u^{n+1}-u^n)\rvert^2\text{d}\omega_d\right).
\end{split}\end{equation}
Note that all the integrands in the integrals and inner products (regarded as integrals) in the above expressions \eqref{equ:energylem1} and \eqref{equ:energylem2} are polynomials of degree at most $2N$. These integrals and inner products can be replaced by their discrete versions \eqref{equ:quad} and \eqref{equ:discreteinnerproduct}, respectively, with the assumption that the quadrature exactness degree be $2N$ or $4N$.

Meanwhile, as 
\begin{equation*}
F(u^{n+1})=F(u^n) + f(u^n)(u^{n+1}-u^n)+\frac12f'(\xi)(u^{n+1}-u^n)^2,
\end{equation*}
where $\xi$ lies between $u^n$ and $u^{n+1}$, we then have
\begin{equation}\label{equ:energylem5}\begin{split}
\sum_{j=1}^mw_j F(u^{n+1}(x_j))&\leq\sum_{j=1}^mF(u^n(x_j)) + \left\langle f(u^n),u^{n+1}-u^n\right\rangle_m\\
&+\left(\frac32 \max\left\{\|u^n\|_{\infty}^2,\|u^{n+1}\|_{\infty}^2\right\}-\frac12\right)\sum_{j=1}^mw_j(u^{n+1}(x_j)-u^n(x_j))^2
\end{split}\end{equation}
and
\begin{equation}\label{equ:energylem4}\begin{split}
\int_{\mathbb{S}^{d-1}} F(u^{n+1}) \text{d}\omega_d&\leq \int_{\mathbb{S}^{d-1}}F(u^n) \text{d}\omega_d + \left\langle f(u^n),u^{n+1}-u^n\right\rangle\\
&+\left(\frac32 \max\left\{\|u^n\|_{\infty}^2,\|u^{n+1}\|_{\infty}^2\right\}-\frac12\right)\int_{\mathbb{S}^{d-1}}(u^{n+1}-u^n)^2\text{d}\omega_d.
\end{split}\end{equation}

When the quadrature exactness degree is $2N$, we know from \eqref{equ:energylem5} and the discrete versions of equations \eqref{equ:energylem1} and \eqref{equ:energylem2} that
\begin{equation*}\begin{split}
&\tilde{\mathcal{E}}(u^{n+1})-\tilde{\mathcal{E}}(u^n)+\frac{\nu^2}{2}\sum_{j=1}^mw_j\lvert \nabla(u^{n+1}(x_j)-u^n(x_j)) \rvert^2\\
&+\left(\frac{1}{\tau}+\frac12\right)\sum_{j=1}^mw_j(u^{n+1}(x_j)-u^n(x_j))^2 \\
 \leq& \frac32 \max\left\{\|u^n\|_{\infty}^2,\|u^{n+1}\|_{\infty}^2\right\}\sum_{j=1}^mw_j(u^{n+1}(x_j)-u^n(x_j))^2\\
 & + \left\langle f(u^n)-\mathcal{L}_N(f(u^n)),u^{n+1}-u^n\right\rangle_m.
\end{split}\end{equation*}
The property \eqref{equ:discreteprojection} suggests 
\begin{equation*}
\left\langle f(u^n)-\mathcal{L}_N(f(u^n)),u^{n+1}-u^n\right\rangle_m=0.
\end{equation*} 
Hence the estimate \eqref{equ:discreteenergyesti} holds.

When the quadrature exactness degree is $4N$, we know from \eqref{equ:energylem5}, \eqref{equ:energylem1}, and \eqref{equ:energylem2} that
\begin{equation}\label{equ:estilargem}\begin{split}
&\mathcal{E}(u^{n+1})-\mathcal{E}(u^n)+\frac{\nu^2}{2}\int_{\mathbb{S}^{d-1}}\lvert \nabla(u^{n+1}-u^n) \rvert^2{\rm{d}}\omega_d\\
&+\left(\frac{1}{\tau}+\frac12\right)\int_{\mathbb{S}^{d-1}}(u^{n+1}-u^n)^2 \text{d}\omega_d\\
 \leq& \frac32 \max\left\{\|u^n\|_{\infty}^2,\|u^{n+1}\|_{\infty}^2\right\}\int_{\mathbb{S}^{d-1}}(u^{n+1}-u^n)^2\text{d}\omega_d\\
 &+ \left\langle f(u^n)-\mathcal{L}_N(f(u^n)),u^{n+1}-u^n\right\rangle.
\end{split}\end{equation}
Note that 
\begin{equation*}
\left\langle f(u^n)-\mathcal{L}_N(f(u^n)),u^{n+1}-u^n\right\rangle= \left\langle f(u^n)-\mathcal{L}_N(f(u^n)),u^{n+1}-u^n\right\rangle_m,
\end{equation*}
because the quadrature exactness degree is $4N$. Thus, by the property \eqref{equ:discreteprojection} again, we have the estimate \eqref{equ:energyesti}.
\end{proof}

\begin{remark}
Lemma \ref{lem:energyesti} immediately suggests that if
\begin{equation*}
\frac{1}{\tau}+\frac12 \geq \frac32 \sup_{n\geq 0}\|u^n\|_{\infty}^2,
\end{equation*}
then $\tilde{\mathcal{E}}(u^{n+1})\leq \tilde{\mathcal{E}}(u^n)$ when the quadrature exactness degree is $2N$, and 
$\mathcal{E}(u^{n+1})\leq \mathcal{E}(u^n)$ when the quadrature exactness degree is $4N$.
\end{remark}

\begin{remark}
From the proof of Lemma \ref{lem:energyesti}, we can see that if we do not make the quadrature exactness assumption, the terms $\left\langle f(u^n)-\mathcal{L}_N(f(u^n)),u^{n+1}-u^n\right\rangle_m$ and $\left\langle f(u^n)-\mathcal{L}_N(f(u^n)),u^{n+1}-u^n\right\rangle$ cannot be guaranteed to be zero or negative. Thus, we cannot claim the (discrete) energy decay of the numerical solutions generated by \eqref{equ:scheme}. However, in practice, the estimate \eqref{equ:estilargem} may suggest that if we consider a sufficiently large number $m$ (depending on $\nu$) of quadrature points to construct $\mathcal{L}_N$ such that
\begin{equation*}
\frac{\nu^2}{2}\int_{\mathbb{S}^{d-1}}\lvert \nabla(u^{n+1}-u^n) \rvert^2{\rm{d}}\omega_d\geq \left\langle f(u^n)-\mathcal{L}_N(f(u^n)),u^{n+1}-u^n\right\rangle,
\end{equation*}
one may still have energy stability. We opt not to investigate this numerical issue, which is out of the scope of the paper.
\end{remark}

\begin{theorem}[Energy stability for $0<\tau\leq 1/2$]\label{thm:energystability1}
Let $0< \tau\leq 1/2$. Assume $u_0\in H^{s}(\mathbb{S}^{d-1})$ with $s>d-1$ and $\|u_0\|_{\infty}\leq 1$. Then, there exists $N_5:=N_5(\nu,s,d,u_0)$ such that, for $N\geq N_5$, we have the discrete energy decay
\begin{equation*}
\tilde{\mathcal{E}}(u^{n+1})\leq\tilde{\mathcal{E}}(u^{n}),\quad n\geq 0
\end{equation*}
if the quadrature rule \eqref{equ:quad} has exactness degree $2N$, and the energy decay
\begin{equation*}
\mathcal{E}(u^{n+1})\leq\mathcal{E}(u^{n}),\quad n\geq 0
\end{equation*}
if the quadrature rule \eqref{equ:quad} has exactness degree $4N$.
\end{theorem}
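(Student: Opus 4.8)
The plan is to read off both conclusions from the Remark following Lemma~\ref{lem:energyesti}, which reduces everything to certifying the single inequality
\begin{equation*}
\frac{1}{\tau}+\frac12\geq \frac32\sup_{n\geq 0}\|u^n\|_{\infty}^2.
\end{equation*}
Indeed, once this holds, substituting it into \eqref{equ:discreteenergyesti} bounds the right-hand side there by $(1/\tau+1/2)\sum_j w_j(u^{n+1}(x_j)-u^n(x_j))^2$, forcing $\tilde{\mathcal{E}}(u^{n+1})\leq\tilde{\mathcal{E}}(u^n)$ in the $2N$-exact case, and the same substitution into \eqref{equ:energyesti} yields $\mathcal{E}(u^{n+1})\leq\mathcal{E}(u^n)$ in the $4N$-exact case. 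So the only remaining task is to control $\sup_{n\ge 0}\|u^n\|_{\infty}$ and check that the numerical constants leave enough room.

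First I would exploit the hypothesis $0<\tau\leq 1/2$, which gives $1/\tau\geq 2$ and hence $1/\tau+1/2\geq 5/2$ for every admissible $\tau$, uniformly. The target inequality is therefore implied by $\sup_{n\ge0}\|u^n\|_{\infty}^2\leq 5/3$. The key quantitative observation is that $\sqrt{5/3}>1$, so there is a strictly positive margin above the sharp value $1$: I would fix once and for all an absolute constant $\alpha_0\in(0,1]$ small enough that $(1+\alpha_0)^2\leq 5/3$ (any $0<\alpha_0\leq\sqrt{5/3}-1$ works). Since $\|u_0\|_{\infty}\leq 1$ and the quadrature has exactness degree at least $2N$ in both the $2N$ and $4N$ settings, part~1 of Corollary~\ref{cor:quadrature} then supplies a threshold $N_4(\alpha_0,\nu,s,d,u_0)$ such that $N\geq N_4$ forces $\sup_{n\geq 0}\|u^n\|_{\infty}\leq 1+\alpha_0$. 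Because $\alpha_0$ is now frozen as an absolute constant, $N_4$ depends only on $\nu,s,d,u_0$, so I would simply set $N_5:=N_4$. For such $N$ the estimates chain together as
\begin{equation*}
\frac32\sup_{n\ge0}\|u^n\|_{\infty}^2\leq\frac32(1+\alpha_0)^2\leq\frac52\leq\frac1\tau+\frac12,
\end{equation*}
which is exactly the hypothesis needed to invoke the Remark.

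I do not anticipate a genuine obstacle here, since the argument is a clean composition of the one-step energy estimate with the uniform $L^{\infty}$ bound. The single point that must be verified carefully—and the only place where $\tau\leq 1/2$ is really used—is that the margin $5/2-3/2=1$ is large enough to absorb a uniform bound strictly above $1$; this is precisely the feature provided by the effective, non-sharp maximum principle of Corollary~\ref{cor:quadrature}, and it is what rescues the argument from the fact that hyperinterpolation need not preserve the sharp bound $\|u_0\|_{\infty}\le 1$. The mild subtlety to watch is that $N_4$ (hence $N_5$) must be independent of $\tau$ so that one threshold serves simultaneously for all $\tau\in(0,1/2]$; this holds because the $L^{\infty}$ stability is established with $\tau$-independent thresholds. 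With these checks in place, Lemma~\ref{lem:energyesti} closes both the $2N$ and $4N$ cases at once.
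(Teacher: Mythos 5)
Your proposal is correct and follows essentially the same route as the paper's proof: fixing $\alpha_0=\sqrt{5/3}-1$ (or any smaller value) in part~1 of Corollary~\ref{cor:quadrature} to obtain $\sup_{n\geq0}\|u^n\|_{\infty}\leq\sqrt{5/3}$, then chaining $\frac1\tau+\frac12\geq\frac52\geq\frac32\sup_{n\geq0}\|u^n\|_{\infty}^2$ and invoking Lemma~\ref{lem:energyesti} for both the $2N$- and $4N$-exact cases. Your added remarks on the $\tau$-independence of the threshold and on exactness degree $4N$ subsuming $2N$ are accurate but left implicit in the paper.
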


\begin{proof}
Let $\alpha_0=\sqrt{5/3}-1$ in Corollary \ref{cor:quadrature}. Then, there exists $N_5(\nu,s,d,u_0)$ such that, for $N\geq N_5$, it holds that
\begin{equation*}
\sup_{n\geq 0 }\|u^n \|_{\infty}\leq\sqrt{\frac{5}{3}}.
\end{equation*}
Thus, we have
\begin{equation*}
\frac{1}{\tau}+\frac12\geq \frac52\geq \frac32\sup_{n\geq 0 }\|u^n \|_{\infty}^2,
\end{equation*}
and furthermore, both energy decaying estimates follow directly from Lemma \ref{lem:energyesti}.
\end{proof}

With the aid of Theorem \ref{thm:uniformstability2} and Remark \ref{rem:dongli}, we now derive the energy stability result for $\tau\geq1/2$. This result is only valid for $1/2<\tau<\tau_1\approx 0.86$. Consider the equation
\begin{equation*}
\frac12+\frac1x=\frac32\cdot\left(\frac23\cdot \frac{(1+x)^{3/2}}{\sqrt{3x}}\right)^2.
\end{equation*}
It is easy to check that
\begin{equation*}
x=\tau_1=\frac12\left(-2+(9-3\sqrt{6})^{1/3}+(9+3\sqrt{6})^{1/3}\right)\approx 0.860018
\end{equation*}
is the unique real-valued solution to this equation. Thus, if $1/2<\tau\leq\tau_1-\epsilon_0$, where $0<\epsilon_0\leq 0.1$, then
\begin{equation}\label{equ:etaepsilon0}
\frac12+\frac{1}{\tau}\geq\frac32\left(\frac{(1+\tau)^{3/2}}{\sqrt{3\tau}}\cdot\frac23+\zeta(\epsilon_0)\right)^2,
\end{equation}
where $\zeta(\epsilon_0)>0$ only depends on $\epsilon_0$. Thus, we have the following theorem.

\begin{theorem}[Energy stability for $1/2< \tau<\tau_1$]\label{thm:energystability2}
Let $1/2<\tau\leq \tau_1-\epsilon_0$ for some $0<\epsilon_0\leq 0.1$, and let
\begin{equation*}
M_1 = \frac{(1+\tau)^{3/2}}{\sqrt{3\tau}}\cdot\frac23+\zeta(\epsilon_0),
\end{equation*}
where $\zeta(\epsilon_0)$ is the same as the one in \eqref{equ:etaepsilon0}. Assume $u_0\in H^{s}(\mathbb{S}^{d-1})$ with $s>d-1$ and $\|u^0\|_{\infty}\leq M_1$. If $N\geq N_6:=N_6(\tau,\epsilon_0,\nu,s,d,u_0)$, then we have the discrete energy decay
\begin{equation*}
\tilde{\mathcal{E}}(u^{n+1})\leq\tilde{\mathcal{E}}(u^{n}),\quad n\geq 0
\end{equation*}
if the quadrature rule \eqref{equ:quad} has exactness degree $2N$, and the energy decay
\begin{equation*}
\mathcal{E}(u^{n+1})\leq\mathcal{E}(u^{n}),\quad n\geq 0
\end{equation*}
if the quadrature rule \eqref{equ:quad} has exactness degree $4N$.
\end{theorem}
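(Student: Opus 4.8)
The plan is to reduce the assertion to the scalar condition $\frac{1}{\tau}+\frac12\geq\frac32\sup_{n\geq0}\|u^n\|_\infty^2$ highlighted in the remark following Lemma \ref{lem:energyesti}, and then to verify it by upgrading the uniform bound of Theorem \ref{thm:uniformstability2} from $M_0$ to the sharper value $M_1$. Indeed, the threshold $\tau_1$ is manufactured precisely so that \eqref{equ:etaepsilon0} reads $\frac{1}{\tau}+\frac12\geq\frac32 M_1^2$ for $1/2<\tau\leq\tau_1-\epsilon_0$; the same inequality with $M_0$ in place of $M_1$ fails as $\tau\uparrow\tau_1$, so the coarser bound of Theorem \ref{thm:uniformstability2} will not suffice and the refinement is genuinely needed.

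First I would reproduce the maximum-principle induction of Theorem \ref{thm:uniformstability2}. Writing $p(x)=(1+\tau)x-\tau x^3$, the scheme \eqref{equ:scheme} gives $\|u^{n+1}\|_\infty\leq\|u^n-\mathcal{L}_Nu^n\|_\infty+\|p(u^n)\|_\infty+\|\mathcal{L}_{>N}(p(u^n))\|_\infty$, and since $|u^n(x)|\leq\|u^n\|_\infty$ we have $\|p(u^n)\|_\infty\leq\max_{|x|\leq\|u^n\|_\infty}|p(x)|$. Because exactness is assumed we have $\eta=0$, so Lemmas \ref{lem:hypernorm} and \ref{lem:Pi>N} reduce the two hyperinterpolation-error terms to $(1+\|\mathcal{L}_N\|_\infty)E_N(\cdot)=\mathcal{O}(N^{d-1-s})$, which — using the uniform bound $\sup_{j\leq n}\|u^j\|_{H^s}\leq c_{\nu,u_0,s,d}$ obtained from the discrete smoothing estimate exactly as before — can be forced below $\zeta(\epsilon_0)$ by taking $N\geq N_6$. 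This yields the perturbed recursion $\|u^{n+1}\|_\infty\leq\max_{|x|\leq\|u^n\|_\infty}|p(x)|+\zeta(\epsilon_0)$.

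The decisive observation is that $M_1=\frac23\frac{(1+\tau)^{3/2}}{\sqrt{3\tau}}+\zeta(\epsilon_0)$ equals $\max_{|x|\leq M_1}|p(x)|+\zeta(\epsilon_0)$, i.e. $M_1$ is a fixed point of the prototype map $\alpha\mapsto\max_{|x|\leq\alpha}|p(x)|+\zeta(\epsilon_0)$ of Lemma \ref{lem:dongli}. This uses that the interior maximum of $p$ on $(0,\infty)$ is $\frac23\frac{(1+\tau)^{3/2}}{\sqrt{3\tau}}$, attained at $x^*=\sqrt{(1+\tau)/(3\tau)}<M_1$, and that $M_1$ remains below the crossover $\sqrt{(2+\tau)/\tau}$ so the endpoint value never overtakes $p(x^*)$; these two inequalities should be checked directly on $1/2<\tau\leq\tau_1-\epsilon_0$. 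Since the map is nondecreasing in $\alpha$ and $\|u^0\|_\infty\leq M_1$ by hypothesis, the invariance $\|u^n\|_\infty\leq M_1\Rightarrow\|u^{n+1}\|_\infty\leq M_1$ propagates exactly as in Case 2 of Lemma \ref{lem:dongli}, giving $\sup_{n\geq0}\|u^n\|_\infty\leq M_1$.

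It then remains to combine this with \eqref{equ:etaepsilon0}: for every $n$ we get $\frac1\tau+\frac12\geq\frac32 M_1^2\geq\frac32\max\{\|u^n\|_\infty^2,\|u^{n+1}\|_\infty^2\}$, so the right-hand side of each energy estimate \eqref{equ:discreteenergyesti} and \eqref{equ:energyesti} in Lemma \ref{lem:energyesti} is bounded by the $(\frac1\tau+\frac12)$-weighted square-difference term already appearing on the left, and the two cancel to leave $\tilde{\mathcal{E}}(u^{n+1})\leq\tilde{\mathcal{E}}(u^n)$ under exactness degree $2N$ and $\mathcal{E}(u^{n+1})\leq\mathcal{E}(u^n)$ under exactness degree $4N$. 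I expect the main obstacle to be this Step 1 — securing the sharp bound $M_1$ rather than the $M_0$ of Theorem \ref{thm:uniformstability2} — because it requires exploiting the fixed-point/invariance structure of Lemma \ref{lem:dongli} under the stronger hypothesis $\|u^0\|_\infty\leq M_1$, together with the geometric verification that $M_1$ lies in the stable window $(x^*,\sqrt{(2+\tau)/\tau})$ across the whole range $1/2<\tau\leq\tau_1-\epsilon_0$.
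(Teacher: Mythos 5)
Your proposal is correct and takes essentially the same route as the paper: the paper also first secures $\sup_{n\geq0}\|u^n\|_{\infty}\leq M_1$ by applying Theorem \ref{thm:uniformstability2} with $\eta=0$ together with Remark \ref{rem:dongli} (which licenses replacing $M_0$ by any bound in the stable window of Lemma \ref{lem:dongli}), and then closes exactly as you do, via Lemma \ref{lem:energyesti} and the defining inequality \eqref{equ:etaepsilon0} of $\tau_1$. The only difference is presentational: you unpack the remark into an explicit fixed-point/invariance argument for $M_1=\frac{2}{3}\frac{(1+\tau)^{3/2}}{\sqrt{3\tau}}+\zeta(\epsilon_0)$ (where, strictly, the relevant crossover for the endpoint term is not $\sqrt{(2+\tau)/\tau}$ itself but the point where $\lvert p(\alpha)\rvert$ re-attains $p(x^*)$ --- a gap closed by the smallness of $\zeta(\epsilon_0)$, as in the cited lemma), whereas the paper simply invokes the remark.
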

\begin{proof}
With $\eta = 0$, Theorem \ref{thm:uniformstability2} and Remark \ref{rem:dongli} immediately suggest the $L^{\infty}$ stability of
\begin{equation*}
\sup_{n\geq 0}\|u^n\|_{\infty}\leq M_1.
\end{equation*}
In light of the energy estimates in Lemma \ref{lem:energyesti}, it suffices to ensure
\begin{equation*}
\frac12+\frac{1}{\tau}\geq\frac32 M_1^2=\frac32\left(\frac{(1+\tau)^{3/2}}{\sqrt{3\tau}}\cdot\frac23+\eta(\epsilon_0)\right)^2,
\end{equation*}
which is exactly \eqref{equ:etaepsilon0}.
\end{proof}

\subsection{An mixed quadrature-based scheme}
Theoretical results in Section \ref{sec:theory} suggest that the new scheme \eqref{equ:scheme} may not have energy stability if the quadrature exactness \eqref{equ:exactness} is not assumed. Recall that our third motivation for studying the scheme \eqref{equ:scheme} is that it may not be practical to acquire desirable samples of the initial condition from quadrature points. Regarding this potential limitation, we can consider the following mixed quadrature-based scheme
\begin{equation}\label{equ:enhancedscheme}
\begin{cases}
&\dfrac{u^{n+1}-u^n}{\tau}=\nu^2\Delta u^{n+1}-\tilde{\mathcal{L}}_{N}\left((u^n)^3-u^n\right),\quad n\geq 0,\vspace{0.15cm}\\
& u^0 = {\mathcal{L}}_{N}u_0,
\end{cases}
\end{equation}
where $\mathcal{L}_N$ is constructed by quadrature rules \eqref{equ:quad} satisfying Assumption \ref{assumption:1MZ} only and $\tilde{\mathcal{L}}_N$ is the hyperinterpolation operator constructed by quadrature rules with quadrature exactness degree of $2N$ or $4N$. Thus, if $u_0\in H^{s}(\mathbb{S}^{d-1})$ with $s>d-1$, $s_0>\frac{d-1}{2}$ and $\eta= \tilde{c} N^{-\varepsilon}$ for any $\tilde{c}\geq0$ and $\varepsilon >s_0$. Thus, the performance of the mixed quadrature-based scheme \eqref{equ:enhancedscheme} can also be characterized by Corollary \ref{cor:quadrature}, Theorem \ref{thm:energystability1}, and Theorem \ref{thm:uniformstability2}. The imposed assumptions only aim to guarantee \eqref{equ:positivealpha}. Thus, with this mixed quadrature-based scheme, even for a set of scattered data of $u_0$, it is still possible to generate a sequence of numerical solutions quantified by Corollary \ref{cor:quadrature}.

\section{Numerical experiments}\label{sec:example}

In this section, we present some numerical experiments on the 2-sphere $\mathbb{S}^2\subset\mathbb{R}^3$ to verify the theoretical assertions presented in the previous sections. It is worth noting that $|\mathbb{S}^2| = 4\pi$. For simplicity, we consider quadrature rules \eqref{equ:quad} with equal-weight weights
\begin{equation*}
w_j={4\pi}/{m},\quad j=1,2,\ldots,m.
\end{equation*}
Numerous point sets on the sphere have been introduced in the literature. In our experiments, we use the following points sets: 1) randomly scattered points generated in MATLAB; 2) equal area points \cite{MR1306011} generated based on an algorithm given in \cite{MR2582801}; 3) Fekete points which maximize the determinant for polynomial interpolation \cite{MR2065291}; 4) Coulomb energy points which minimize $\sum_{i,j=1}^m(1/\|x_i-x_j\|_2)$ \cite{MR1845243}; and 5) well-conditioned spherical $t$-designs proposed in \cite{MR2763659}. Fekete points and Coulomb energy points are precomputed by R. Womersley and are available on his website\footnote{Robert Womersley, \emph{Interpolation and Cubature on the Sphere}, \url{http://
www.maths.unsw.edu.au/~rsw/Sphere/}; accessed in March, 2023.}. All codes were written by MATLAB R2022a, and all numerical experiments were conducted on a laptop (16 GB RAM, Intel CoreTM i7-9750H Processor) with macOS Monterey 12.5.

 We begin with an experiment to illustrate how the phases are separated using the above-mentioned five different types of quadrature points. We set $\nu=10^{-1}$, $\tau=0.5$, and $N = 15$, consider the initial condition
 \begin{equation}\label{equ:testinitial}
 u(0, x, y, z) = \cos(\text{cosh}(5xz) - 10y),
 \end{equation}
and solve for $u$ up to time $t = 70$. The numerical solutions at times $t= 0,5,10,15,70$ are shown in Figure \ref{fig:Ex1}. The initial condition quickly converges to a metastable state $u\approx \pm 1$ (yellow area indicates $u\approx1$, and blue area indicates $u\approx-1$) at time around $t=10$ (for equal area points, Coulomb energy points, and spherical $t$-designs) and around $t=15$ (for random points and Fekete points), and it eventually reaches the stable state $u=1$ at around $t=70$. We note that random points may perform slightly worse than points with certain properties, and the inferior performance of Fekete points may be due to the fact that all computed Fekete points are only approximate local maximizers of the determinant for polynomial interpolation. Nevertheless, this experiment demonstrates how our proposed practical scheme \eqref{equ:scheme} works.

\begin{figure}[h]
  \centering
  \begin{subfigure}
  \centering
  \includegraphics[width=11.5cm]{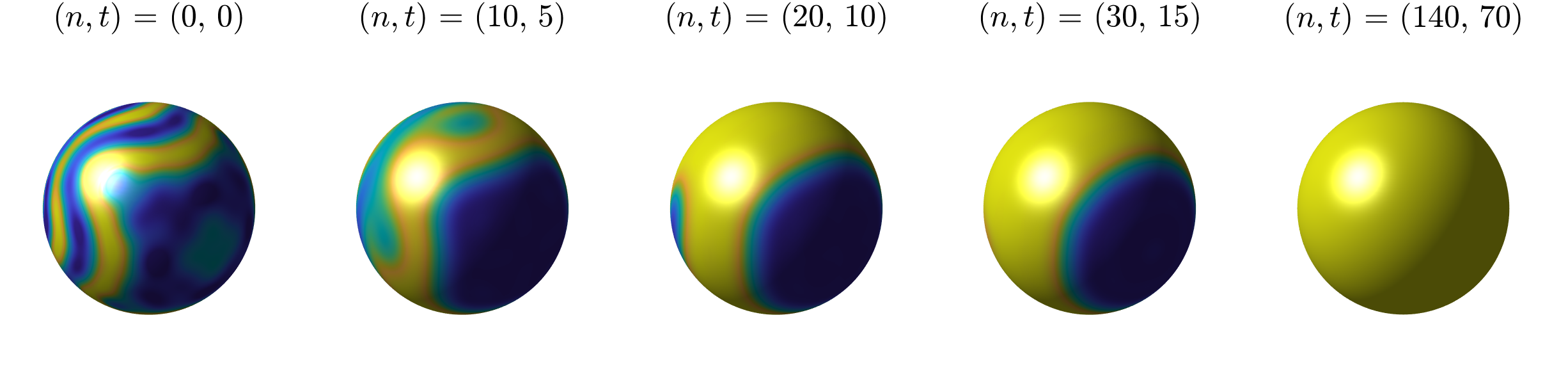}
\end{subfigure}\\
\begin{subfigure}
  \centering
  \includegraphics[width=11.5cm]{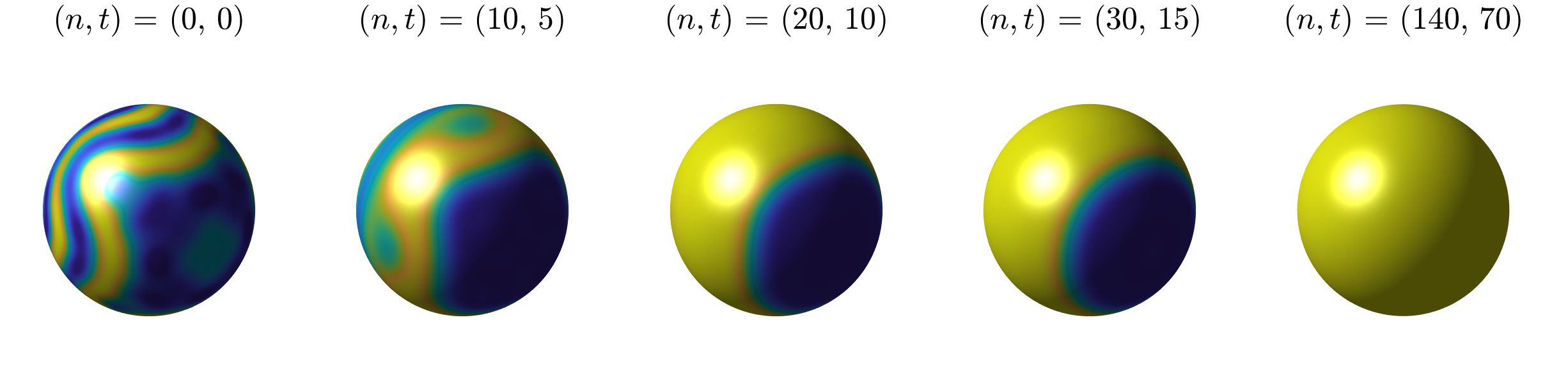}
\end{subfigure}\\
\begin{subfigure}
  \centering
  \includegraphics[width=11.5cm]{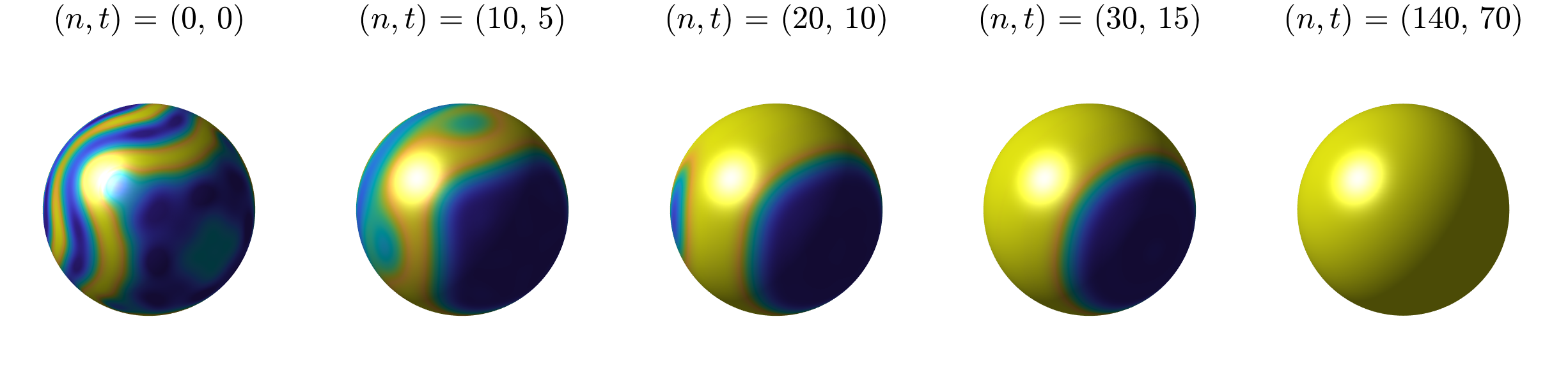}
\end{subfigure}\\
\begin{subfigure}
  \centering
  \includegraphics[width=11.5cm]{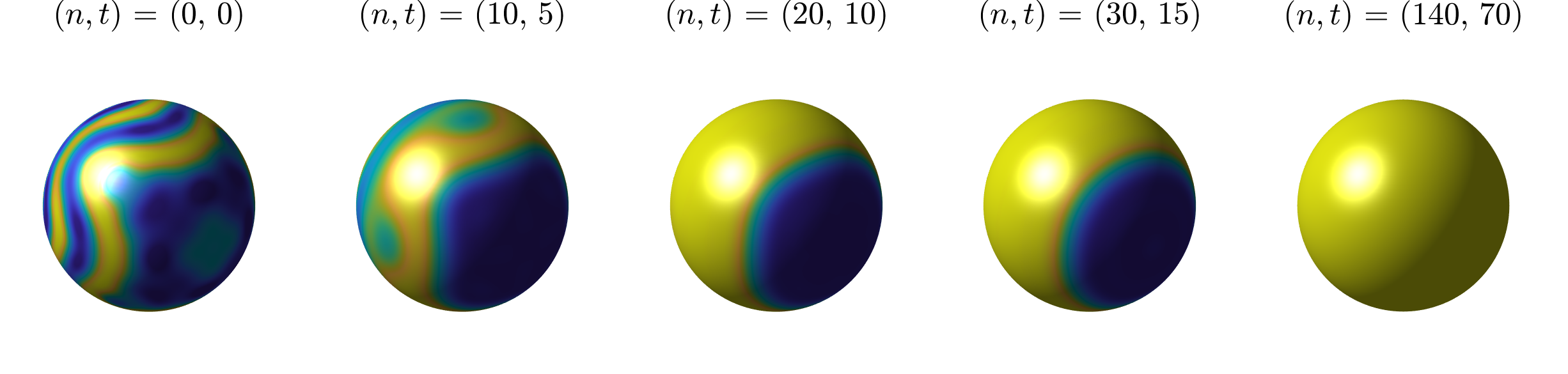}
\end{subfigure}\\
\begin{subfigure}
  \centering
  \includegraphics[width=11.5cm]{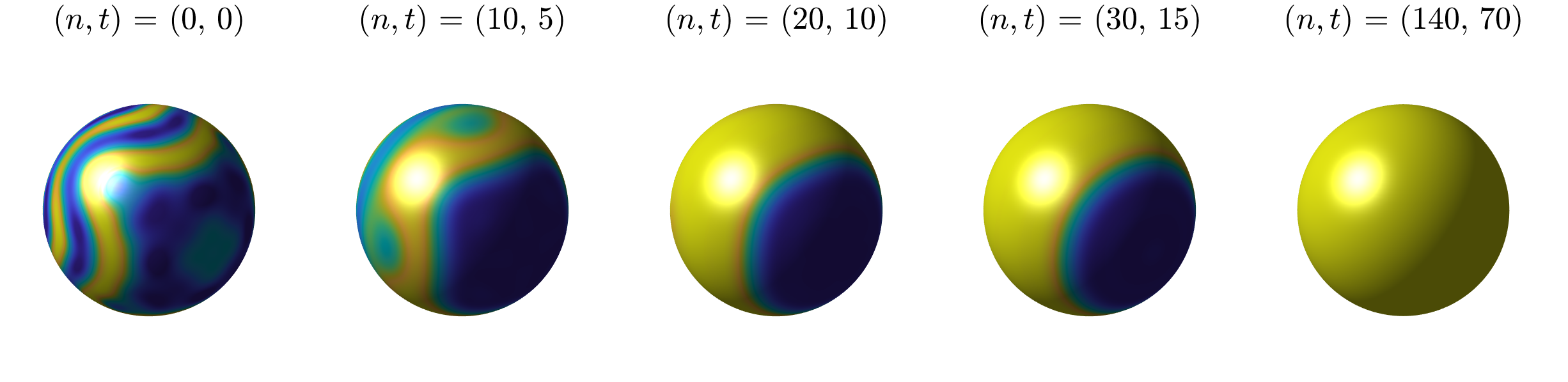}
\end{subfigure}\\
  \caption{Numerical solution to the Allen--Cahn equation \eqref{equ:AC} with $\nu=0.1$ and initial condition \eqref{equ:testinitial} using our scheme \eqref{equ:scheme} with $\tau=0.5$, $N = 15$, and different quadrature points. From top row to bottom row: $m = \lfloor 120N^2\ln{N}\rfloor =73,117$ random points; $m = (2N+1)^2 =961$ equal area points; $m =961$ Fekete points; $m =961$ Coulomb energy points; and $m =961$ spherical $2N$-designs. }\label{fig:Ex1}
\end{figure}

In our second experiment, we aim to test the effective maximum principle and the $L^{\infty}$ stability of the numerical solutions generated by our scheme \eqref{equ:scheme}, without the quadrature exactness assumption \eqref{equ:exactness}. Namely, we verify the theoretical assertions in Section \ref{sec:theory} using random points, equal area points, Fekete points, and Coulomb energy points. The uniform norms $\|u^n\|_{\infty}$ of the numerical solution $u^n$ to the Allen--Cahn equation \eqref{equ:AC} with $\nu=0.1$ and initial condition \eqref{equ:testinitial} are documented in Figure \ref{fig:Ex2}, in which we set $\tau\in\{0.5,1,1.99\}$, $N \in\{10, 16, 24\}$, and $m =\lfloor 120N^2\ln{N}\rfloor $ for random points and $m = (2N+1)^2$ for equal area points, Fekete points, and Coulomb energy points.

We theoretically demonstrate that, if $\tau\leq 1/2$, then the effective maximum principle holds. That is, for any fixed $N$, the upper bound of $\|u^n\|_{\infty}$ decreases as time advances. This principle suggests that,  although $\|u^{n}\|_{\infty}$ may backtrack, it eventually decreases. This is verified by the first column of Figure \ref{fig:Ex2}, in which $\tau=0.5$ ensures the effective maximum principle. If $1/2<\tau<2$, then from the $L^{\infty}$ stability result, we know that $\|u^n\|_{\infty}$ is bounded by $\|u^0\|_{\infty}$, which is illustrated by the second and third columns of Figure \ref{fig:Ex2}.

\begin{figure}[htbp]
  \centering
  \begin{subfigure}
  \centering
  \includegraphics[width=\textwidth]{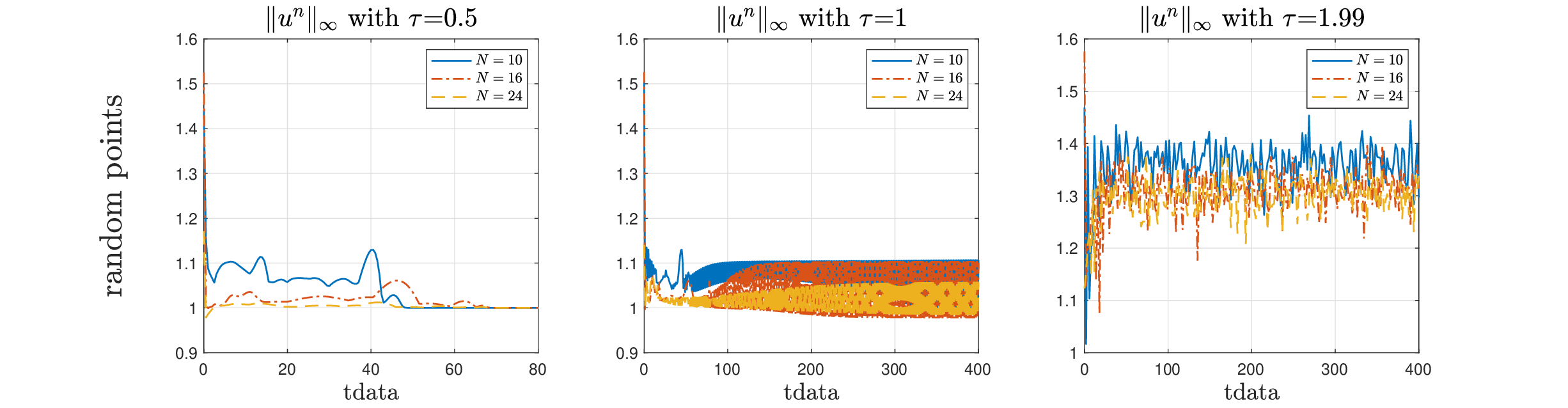}
\end{subfigure}\\
\begin{subfigure}
  \centering
  \includegraphics[width=\textwidth]{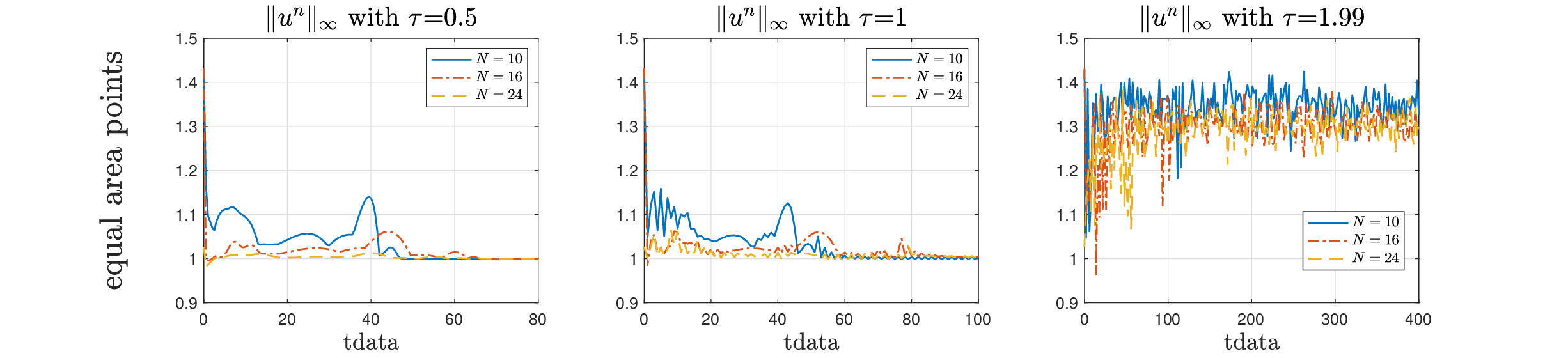}
\end{subfigure}\\
\begin{subfigure}
  \centering
  \includegraphics[width=\textwidth]{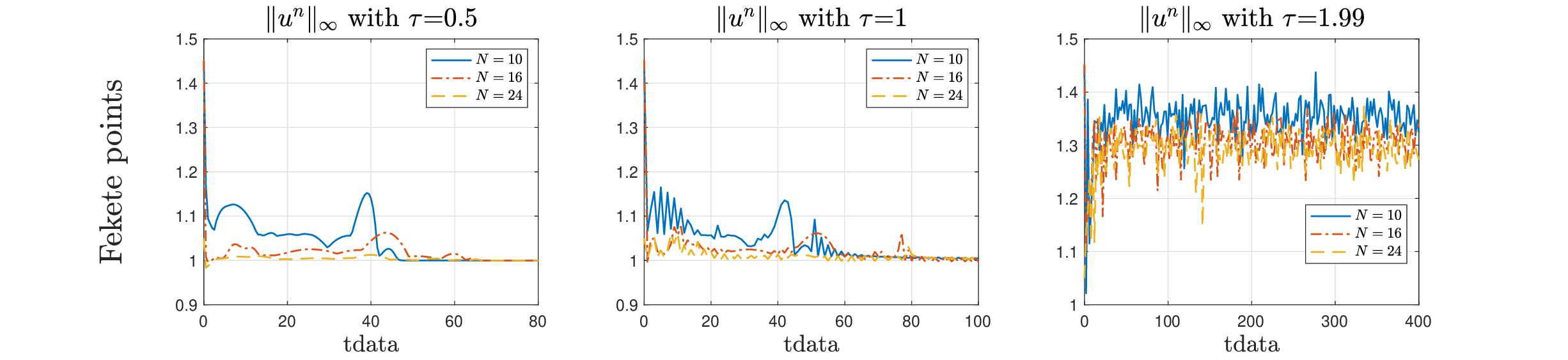}
\end{subfigure}\\
\begin{subfigure}
  \centering
  \includegraphics[width=\textwidth]{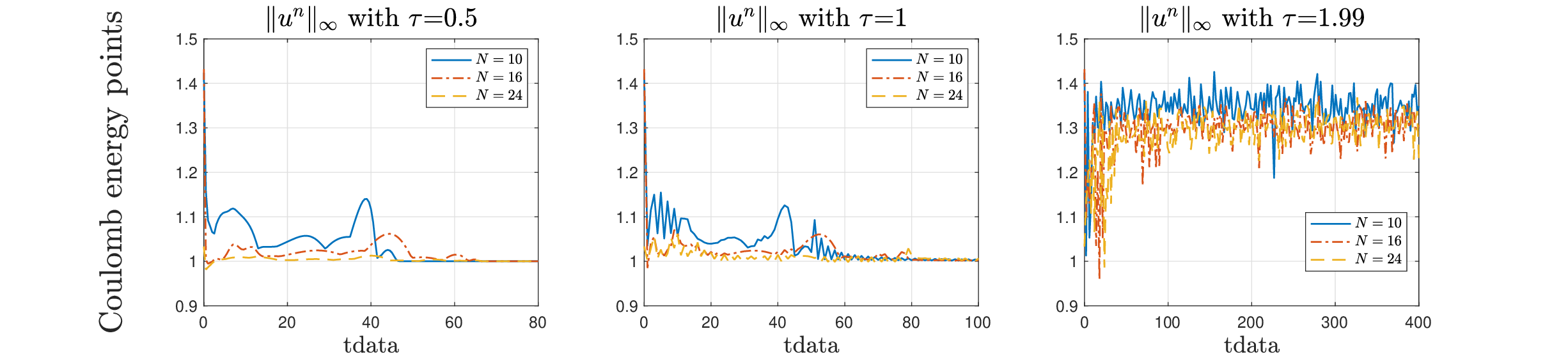}
\end{subfigure}\\
  \caption{Uniform norms of the numerical solution to the Allen--Cahn equation \eqref{equ:AC} with $\nu=0.1$ and initial condition \eqref{equ:testinitial} using our scheme \eqref{equ:scheme} with $\tau\in\{0.5,1,1.99\}$, $N \in\{10, 16, 24\}$, and $m =\lfloor 120N^2\ln{N}\rfloor $ for random points and $m = (2N+1)^2$ for equal area points, Fekete points, and Coulomb energy points.}\label{fig:Ex2}
\end{figure}

In our third experiment, we investigate the energy decay of the new scheme \eqref{equ:scheme} and test the mixed quadrature-based scheme \eqref{equ:enhancedscheme} discussed in Section \ref{sec:discussion}. Since our analysis in Section \ref{sec:discussion} relies on quadrature exactness, we consider spherical $t$-designs. Recall that, when $0<\tau\leq 0.86$, the scheme \eqref{equ:scheme} using quadrature rules of exactness degree $2N$ ensures discrete energy decay $\tilde{\mathcal{E}}(u^{n+1})\leq\tilde{\mathcal{E}}(u^{n})$ for $n\geq 0$, if the degree $N$ is sufficiently large. Moreover, for a sufficiently large $N$, the scheme \eqref{equ:scheme} has energy decay
$\mathcal{E}(u^{n+1})\leq\mathcal{E}(u^{n})$ for $n\geq 0$ if the quadrature exactness degree is $4N$. The energy profiles of the numerical solution $u^n$ to the Allen--Cahn equation \eqref{equ:AC} with $\nu=0.1$ and initial condition \eqref{equ:testinitial} are illustrated in Figure \ref{fig:Ex3}, in which we set $\tau\in\{0.1,0.5,0.86\}$ and $N \in \{12, 14, 16\}$. Despite that the energy dissipation property holds for all cases, it seems that the time stepping size significantly influences the energy evolution.

\begin{figure}[htbp]
  \centering
  \includegraphics[width=\textwidth]{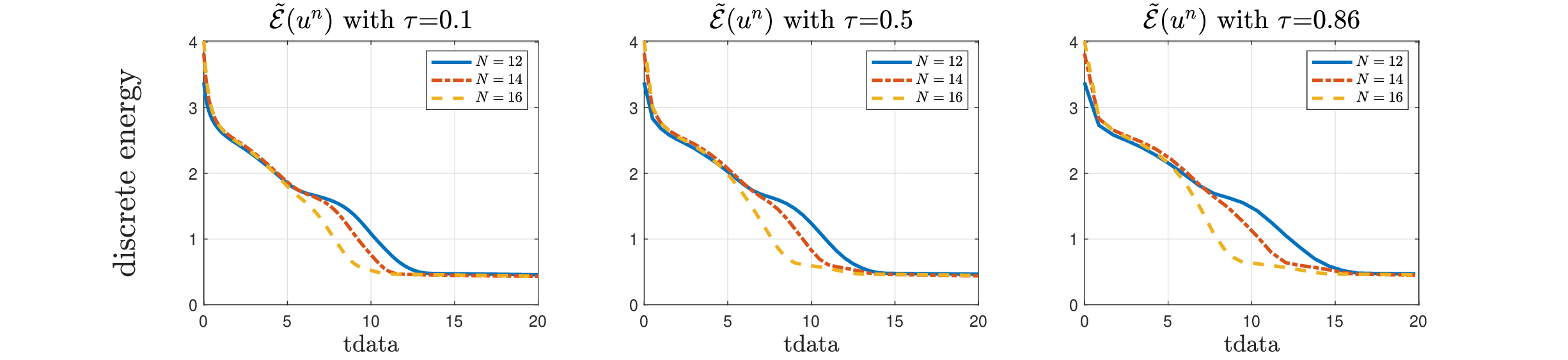}  \\
  \includegraphics[width=\textwidth]{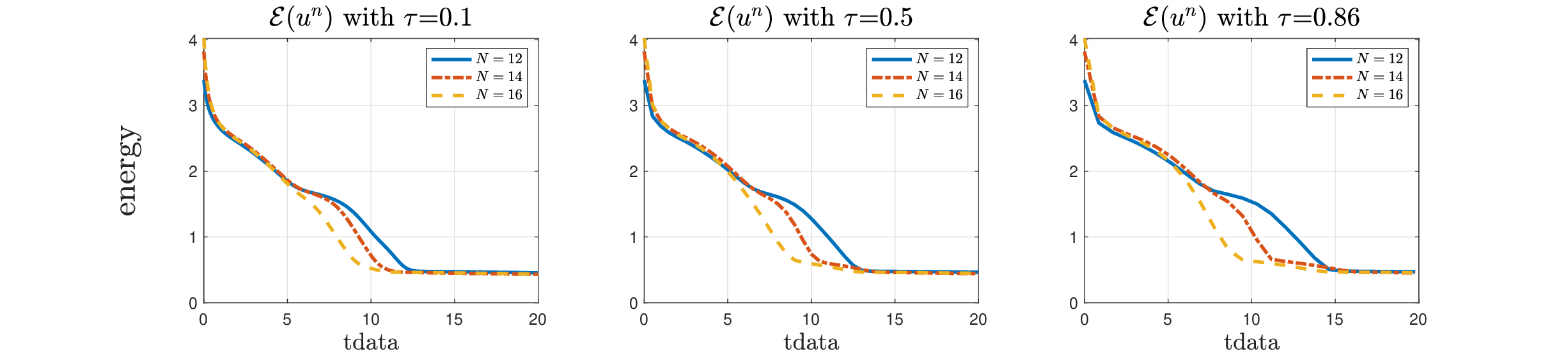}  \\
  \caption{Energy profiles of the numerical solution to the Allen--Cahn equation \eqref{equ:AC} with $\nu=0.1$ and initial condition \eqref{equ:testinitial} using our scheme \eqref{equ:scheme} with $\tau\in\{0.1,0.5,0.86\}$ and $N \in \{12, 14, 16\}$. Top row: using spherical $2N$-designs; Bottom row: using spherical $4N$-designs.}\label{fig:Ex3}
\end{figure}

It is worth noting that quadrature exactness of degree at least $2N$ is necessary for energy dissipation, as evidenced by the following counterexample. Figure \ref{fig:Ex4} records the energy evolution of the numerical solution to the Allen--Cahn equation \eqref{equ:AC} with $\nu=0.01$, and initial condition \eqref{equ:testinitial} using the enew scheme \eqref{equ:scheme} with $\tau=0.86$, and different values of $N$. If the quadrature exactness is only of degree $N$, as shown in the top row of Figure \ref{fig:Ex4}, the discrete energy $\tilde{E}(u^n)$ fails to dissipate, and increasing $N$ does not resolve this issue. On the other hand, if the quadrature exactness degree is $2N$, our refined analysis in Section \ref{sec:discussion} guarantees that discrete energy dissipation always occurs, as demonstrated by the middle row of Figure \ref{fig:Ex4}. Furthermore, if we consider the mixed quadrature-based scheme \eqref{equ:enhancedscheme} proposed in Section \ref{sec:discussion}, where the hyperinterpolation operator with quadrature exactness $N$ is used for projecting $u_0$ to $u^0$ and another hyperinterpolation operator with quadrature exactness $2N$ is used in time evolution, then solutions generated by this scheme exhibit energy dissipation. This is well  shown in the bottom row of Figure \ref{fig:Ex4}.

\begin{figure}[htbp]
  \centering
  \includegraphics[width=\textwidth]{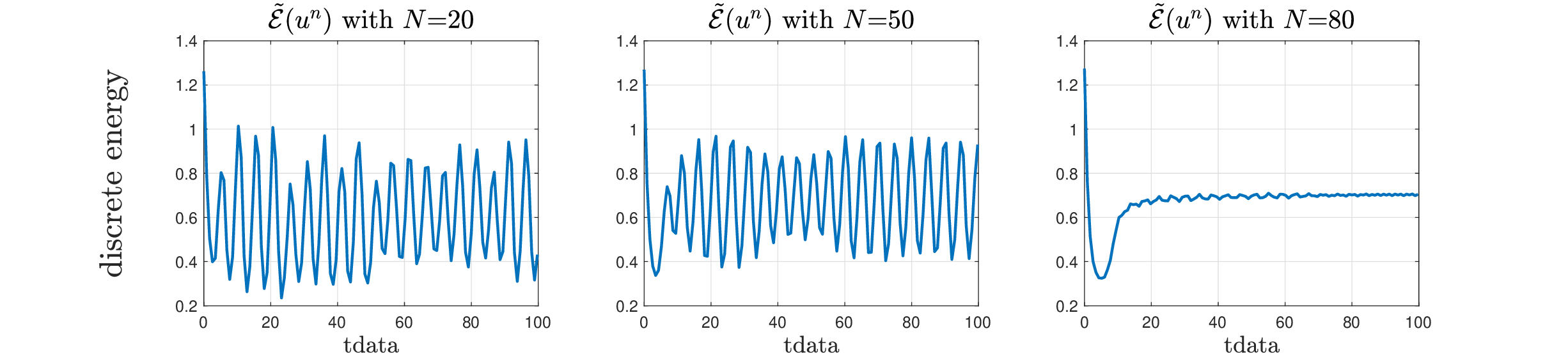}  \\
  \includegraphics[width=\textwidth]{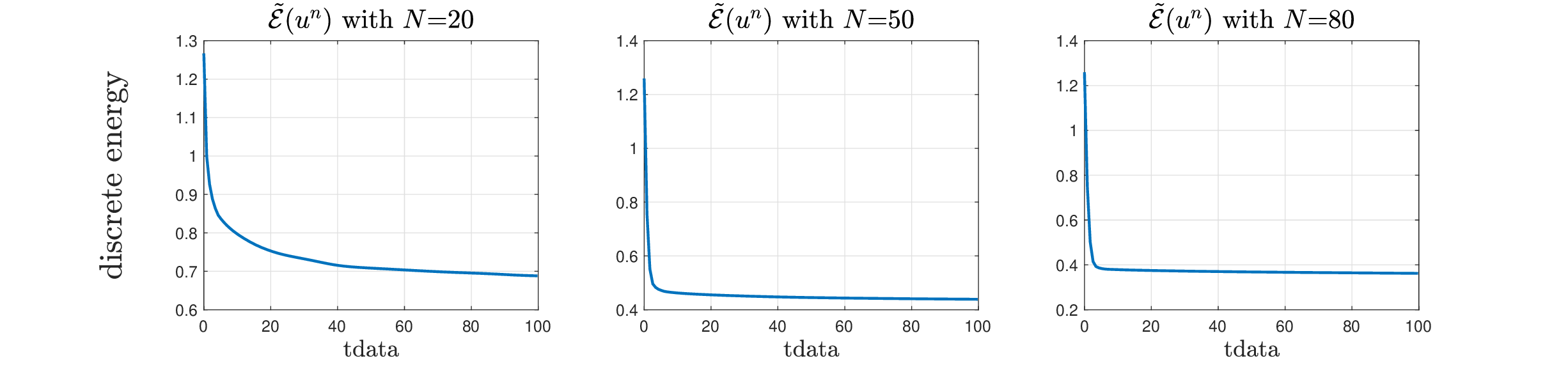}  \\
  \includegraphics[width=\textwidth]{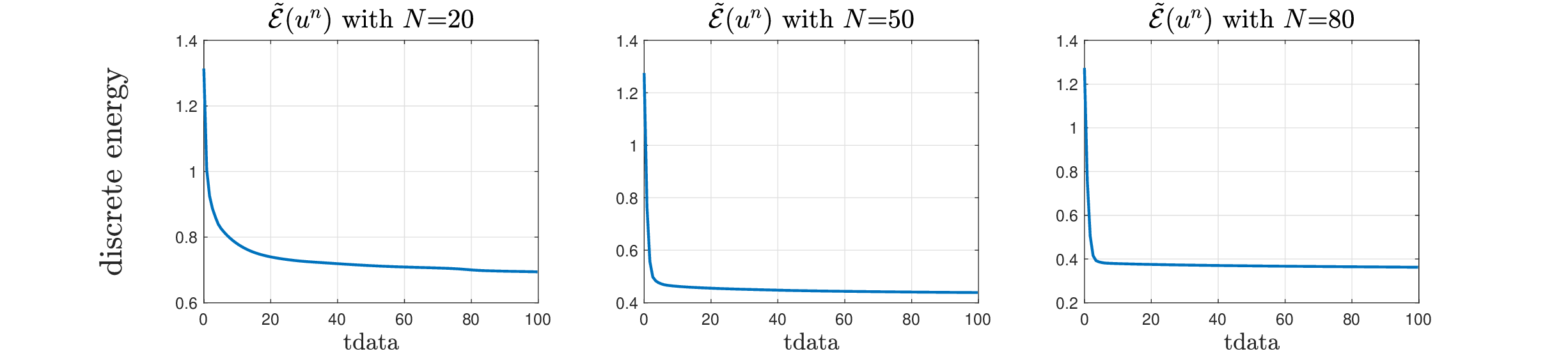}  \\
  \caption{Energy profiles of the numerical solution to the Allen--Cahn equation \eqref{equ:AC} with $\nu=0.01$ and initial condition \eqref{equ:testinitial} using our scheme \eqref{equ:scheme} with $\tau=0.86$ and $N \in\{20,50,80\}$. Top row: quadrature exactness of degree $N$; Middle row: quadrature exactness of degree $2N$; Bottom row: the mixed quadrature-based scheme \eqref{equ:enhancedscheme}.}\label{fig:Ex4}
\end{figure}

\section{Conclusions and discussions}\label{sec:conclusion}

We proposed a novel quadrature-based spectral method for solving the Allen--Cahn equation on spheres. For the new method, we studied its $L^{\infty}$ stability, energy stability, and the effective maximum principle proposed recently by Li in \cite{MR4294331}, based only on assumptions regarding the degree $N$ of the polynomial numerical solutions. These theoretical results differ from those in the Allen--Cahn literature in the sense that there is no need to assume any Lipschitz property of the nonlinear term in the Allen-Cahn equation or any \emph{a priori} $L^{\infty}$ boundedness of the numerical solutions. Moreover, the new method is proved to be suitable for long-time simulations because the time stepping size is independent of the diffusion coefficient $\nu$ in the equation. We summarize our main theoretical results in Table \ref{tab:map}. Our discussion also addresses the specific scenario where desirable data samples may be unavailable at certain quadrature points. This can occur when full access to the initial condition $u_0$ is restricted, and only a set of samples without prescribed locations is available. From the perspective of numerical analysis, our analysis also aligns with the recent trend in re-accessing the necessity of quadrature exactness, because what matters in practice is the accuracy for integrating non-polynomial functions.

\begin{table}[htbp]
\centering
\footnotesize \caption{The map of theoretical results}\label{tab:map}
\begin{tabular}{|c|c|c|}
\hline
Types of results      & $0<\tau\leq1/2$ & $1/2<\tau<2$\\\hline
$L^{\infty}$ stability & Theorem \ref{thm:uniformstability1}    &   Theorem \ref{thm:uniformstability2} \\\hline
Energy stability (with quadrature exactness) & Theorem \ref{thm:energystability1}  &   Theorem \ref{thm:energystability2} (only for $\tau\leq0.86$)\\\hline
Effective maximum principle & Theorem \ref{thm:effectivemaxprinciple1} & Nil  \\\hline
\end{tabular}
\end{table} 

Our approach discretizes the Allen-Cahn equation using intrinsic spherical coordinates. This differs fundamentally from methods that first parameterize the sphere in spherical coordinates then apply Euclidean-based discretization - an approach that inevitably encounters coordinate singularities at the poles (where the transformation's Jacobian becomes singular). By working entirely within the sphere's intrinsic geometry, we avoid these numerical issues while preserving the domain's geometric properties. The method extends naturally to any closed surface diffeomorphic to a sphere, see, e.g., the manipulation of the change of variables in \cite{graham2002fully}.

It is interesting to consider extending our approach to other semi-linear PDEs \eqref{equ:PDE} on other domains, with the nonlinear part $\mathbf{N}(u)$ linearized by its hyperinterpolant. Our implementation framework can be readily extended to any domain where hyperinterpolation is applicable or can be constructed, provided two key components are available: an orthonormal basis and a quadrature rule for integration. It should be noted that we leverage a key property of our basis functions that the spherical harmonics are eigenfunctions of the negative Laplace-Beltrami operator with explicit eigenvalues, eliminating the need for additional spatial discretization of differential operators on spheres. However, on domains other than tori or spheres, it may be necessary to discretize differential operators in an additional step involving the computation of basis polynomial derivatives. For theoretical analysis of the extended schemes, it may follow the analysis in Sections \ref{sec:theory} and \ref{sec:discussion}, with slight modifications on the definitions of corresponding PDEs. Our analysis relies on the Marcinkiewicz-Zygmund property \eqref{equ:MZproperty}. While this property has been well-established for compact manifolds \cite{filbir2011marcinkiewicz} and various Euclidean domains \cite{MR3746524}, its verification remains an essential step when applying our framework to particular domains. Nonetheless, the new method may be promising for numerically solving a wide range of semi-linear PDEs in bounded and closed regions of $\mathbb{R}^d$, where hyperinterpolation can be defined.

~\\
\noindent \textbf{Acknowledgements.} The work of X. Yuan was supported by the Hong
Kong Research Grants Council under the GRF project 17309824.
\bibliographystyle{siam}
\bibliography{myref}

\clearpage

\end{document}